\author[J.\ Gong]{Jasun Gong}
\address{
Department of Mathematics,
  University of Pittsburgh,
  Pittsburgh, PA 15260}
\email{jasun@pitt.edu}
\title[Extension Theorem for Homeomorphisms of Class $\LW$]{A Schoenflies Extension Theorem for a Class of Locally bi-Lipschitz Homeomorphisms}
\date{Tuesday, 24 November 2009}
\theoremstyle{plain}
\newtheorem{thm}{Theorem}[section]
\newtheorem{cor}[thm]{Corollary}
\newtheorem{lemma}[thm]{Lemma}
\theoremstyle{definition}
\newtheorem{defn}[thm]{Definition}
\newtheorem{rmk}[thm]{Remark}
\newtheorem{claim}[thm]{Claim}
\newtheorem{ques}[thm]{Question}
\numberwithin{equation}{section}
\newcommand{\B}{\mathbb{B}}
\newcommand{\del}{\partial}
\newcommand{\dist}{\operatorname{dist}}
\newcommand{\e}{\epsilon}
\newcommand{\id}{\operatorname{id}}
\newcommand{\loc}{\textrm{loc}}
\newcommand{\LW}{LW^p_2}
\newcommand{\N}{\mathbb{N}}
\renewcommand{\S}{\mathbb{S}}
\newcommand{\R}{\mathbb{R}}
\begin{document}

\begin{abstract}
In this paper we prove a new version of the Schoenflies extension theorem for collared domains $\Omega$ and $\Omega'$ in $\R^n$: for $p \in [1,n)$, locally bi-Lipschitz homeomorphisms from $\Omega$ to $\Omega'$ with locally $p$-integrable, second-order weak derivatives admit homeomorphic extensions of the same regularity.

Moreover, the theorem is essentially sharp.  The existence of exotic $7$-spheres shows that such extension theorems cannot hold, for $p > n = 7$.
\end{abstract}

\maketitle

\section{Introduction}

\subsection{Embeddings of Collars}

In point-set topology, the Schoenflies Theorem \cite[Thm III.5.9]{Wilder} is a stronger form of the well-known Jordan Curve Theorem:\ it states that
\emph{every simple closed curve separates the sphere $\S^2$ into two domains, each of which is homeomorphic to $\B^2$, the open unit disc}.  The same statement does not hold in higher dimensions, since the Alexander horned sphere \cite{Alexanderhorned} provides a counter-example in $\R^3$.  Despite this, Brown \cite{Brown} proved that for each $n \in \N$, every embedding of $\S^{n-1} \times (-\e,\e)$ into $\R^n$ extends to an embedding of $\B^n$ into $\R^n$.

Similar extension problems arise by varying the regularity of the embeddings.  To this end, we prove a Schoenflies-type theorem for a new class of homeomorphisms.
Their regularity is given in terms of Sobolev spaces and Lipschitz continuity.  

To begin, recall that a homeomorphism $f : \Omega \to \Omega'$ is \emph{locally bi-Lipschitz} if for each $z \in \Omega$, there is a neighborhood $O$ of $z$ and $L \geq 1$ so that the inequality 
\begin{equation} \label{eq_bilip}
L^{-1}\,|x-y| \;\leq\; |f(x) - f(y)| \;\leq\; L\,|x-y|
\end{equation}
holds for all $x,y \in O$.
Recall also that for $p \geq 1$ and $k \in \N$, the Sobolev space $W^{k,p}_\loc(\Omega;\Omega')$ consists of maps $f : \Omega \to \Omega'$, where each component $f_i$ lies in $L^p_\loc(\Omega)$ and has weak derivatives of orders up to $k$ in $L^p_\loc(\Omega)$.

\begin{defn}
Let $f : \Omega \to \Omega'$ be a locally bi-Lipschitz homeomorphism.  For $p \in [1, \infty)$, we say that $f$ is of \emph{class $\LW$} if $f \in W^{2,p}_\loc(\Omega;\Omega')$ and $f^{-1} \in W^{2,p}_\loc(\Omega';\Omega)$.  If $K$ and $K'$ are closed sets, a homeomorphism $f : K \to K'$ is of class $\LW$ if the restriction of $f$ to the interior of $K$ is of class $\LW$.
\end{defn}

Instead of product sets of the form $\S^{n-1} \times (-\e,\e)$, we will consider domains in $\R^n$ of a similar topological type.

\begin{defn}
A bounded domain $D$ in $\R^n_*$ is \emph{Jordan} if its boundary $\partial D$ is homeomorphic to $\S^{n-1}$.  A \emph{collared domain} (or \emph{collar}) is a domain in $\R^n$ of the form $D_2 \setminus \bar{D}_1$, where $D_1$ and $D_2$ are Jordan domains with $\bar{D}_1 \subset D_2$.
\end{defn}

We now state the extension theorem for homeomorphisms of class $\LW$ between collared domains.

\begin{thm} \label{thm_mainthm}
Let $D_1$ and $D_2$ be Jordan domains in $\R^n$ so that $\bar{D}_1 \subset D_2$, let $B_1$ and $B_2$ be balls so that $\bar{B}_1 \subset B_2$, and let $p \in [1,n)$.

If $f : \bar{D}_2 \setminus D_1 \to \bar{B}_2 \setminus B_1$ is a homeomorphism of class $\LW$ so that $f(\partial D_i) = \partial B_i$ holds, for $i = 1, 2$, then there exists a homeomorphism $F : \bar{D}_2 \to \bar{B}_2$ of class $\LW$ and a neighborhood $N$ of $\partial D_2$ so that
$F|(N \cap \bar{D}_2) = f|(N \cap \bar{D}_2).$
\end{thm}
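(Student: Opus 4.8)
The plan is to leave $f$ unchanged near $\partial D_2$ and to do all the work in the interior, filling in the ``hole'' $D_1$ by means of the collar structure that $f$ already supplies near $\partial D_1$.

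\textbf{Reduction to filling the hole.} Composing with a bi-Lipschitz diffeomorphism of $\R^n$, we may assume $B_1=\B^n$ and $B_2=\B^n(0,R)$ for some $R>1$; in polar coordinates $\bar B_2\setminus B_1$ becomes $\S^{n-1}\times[1,R]$, so $\Phi:=f^{-1}$ is a bi-Lipschitz homeomorphism of class $\LW$ from $\S^{n-1}\times[1,R]$ onto $\bar D_2\setminus D_1$, carrying $\S^{n-1}\times\{1\}$ to $\partial D_1$ and $\S^{n-1}\times\{R\}$ to $\partial D_2$. For $t\in(1,R)$ let $\Sigma_t:=\Phi(\S^{n-1}\times\{t\})$ and let $V_t$ be the Jordan domain bounded by $\Sigma_t$ with $D_1\subset V_t$; then $\bar V_t\subset D_2$ and $\bigcap_{t>1}\bar V_t=\bar D_1$. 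Fix $1<t_1<t_0<R$, put $F:=f$ on $\Phi(\S^{n-1}\times[t_0,R])$ (a neighborhood of $\partial D_2$ in $\bar D_2$), and observe that then $F=f$ on $N\cap\bar D_2$ for every thin enough neighborhood $N$ of $\partial D_2$. It remains to define $F$ on $\bar V_{t_0}$ as a bi-Lipschitz homeomorphism of class $\LW$ onto $\bar\B^n(0,t_0)$ that coincides with $\Phi^{-1}$ on the round shell $\{t_1\le|y|\le t_0\}$: then $F$ is unambiguously defined (the two pieces agree on $\Sigma_{t_0}$), it maps $\bar D_2$ onto $\bar B_2$, and since $F$ equals $f$ on a full neighborhood of $\Sigma_{t_0}$ inside $D_2$ no singular term appears in $D^2F$ along that seam, so locality of weak derivatives together with the uniform bi-Lipschitz bounds gives $F,F^{-1}\in W^{2,p}_\loc(D_2)$. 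Finally, since $\Phi$ already parametrizes the shell $\bar V_{t_0}\setminus D_1$ by the round shell $\{1\le|y|\le t_0\}$, producing such an $F$ on $\bar V_{t_0}$ comes down to producing a bi-Lipschitz homeomorphism of class $\LW$ from $\bar D_1$ onto a closed ball — compatibly with the collar — and then interpolating, within the shell, between this map and $\Phi^{-1}$.

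\textbf{Filling the hole.} This is the heart of the matter. Topologically $\bar D_1$ is a ball (Brown's generalized Schoenflies, via the bicollaring of $\partial D_1$ coming from $\Phi$), but we need a homeomorphism with quantitative bounds. One builds it by a Schoenflies-type construction carried out with estimates: using the collar $\Phi$ one realizes a neighborhood of $\partial D_1$ as an infinite union of thin shells and progressively rounds them out, in each step interpolating — with bi-Lipschitz constants that stay uniformly bounded — between the map already built and a \emph{radial model}
\[
  x\;\longmapsto\; x_\ast+|x-x_\ast|\;\psi\!\Big(\tfrac{x-x_\ast}{|x-x_\ast|}\Big),
\]
constructed over a $W^{2,p}$ (indeed mollified) bi-Lipschitz homeomorphism $\psi$ of $\S^{n-1}$ that $C^0$-approximates the relevant angular data; the mollification is done inside the shells, so it contributes nothing to $D^2$ across their interfaces. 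The decisive computation is the behavior of a radial model near its vertex $x_\ast$: its first derivatives are bounded (homogeneity $1$) while $|D^2(\text{radial model})|\lesssim|x-x_\ast|^{-1}$ (homogeneity $-1$), so
\[
  \int_{\B^n(x_\ast,r)}\bigl|D^2(\text{radial model})\bigr|^p \;\lesssim\; \int_0^r\rho^{\,n-1-p}\,d\rho\;<\;\infty \qquad\Longleftrightarrow\qquad p<n,
\]
and likewise for the inverse. Summing the contributions of the shells, chosen geometrically small, keeps $\int_{\bar D_1}|D^2(\text{filling})|^p$ and the analogous integral for the inverse finite, and the iteration converges uniformly to the desired bi-Lipschitz homeomorphism of class $\LW$ onto a ball.

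\textbf{Conclusion, sharpness, and the main obstacle.} Assembling the pieces yields the homeomorphism $F:\bar D_2\to\bar B_2$ of class $\LW$ with $F=f$ on a neighborhood of $\partial D_2$, as required. The displayed integral also explains the role of the hypothesis: for $p\ge n$ the radial models leave $W^{2,p}$, so this construction has nothing left to give, and — as recorded in the abstract — for $p>n=7$ the theorem is actually false, because there $W^{2,p}_\loc\hookrightarrow C^1_\loc$ forces an $\LW$ homeomorphism to be a $C^1$-diffeomorphism and the exotic $7$-spheres obstruct any such $C^1$ extension. The main obstacle is precisely the filling step: carrying the otherwise soft Schoenflies argument out quantitatively — keeping the bi-Lipschitz constants uniformly bounded and the $W^{2,p}$-norms finite along the infinite iteration, and in particular approximating bi-Lipschitz homeomorphisms of $\S^{n-1}$ by $W^{2,p}$ bi-Lipschitz homeomorphisms without destroying injectivity or inflating the distortion. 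Everything else — the normalizations, the choice of $N$, and the gluings — is routine once the interfaces are arranged to match to first order.
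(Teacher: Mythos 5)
Your reduction to ``filling the hole'' is fine as far as it goes, and you have correctly located where the hypothesis $p<n$ must enter: the $L^p$-integrability of a second derivative blowing up like $|x-x_*|^{-1}$ at a cone point. But the step you label ``the heart of the matter'' is not a proof --- it is a restatement of the theorem. The assertion that one can round out $\partial D_1$ through an infinite sequence of shells ``with bi-Lipschitz constants that stay uniformly bounded,'' while keeping the $W^{2,p}$ norms summable and the maps injective, is precisely the content to be established, and nothing in your sketch controls it. Two sub-claims in particular are unsupported and are themselves hard: (i) that a bi-Lipschitz homeomorphism of $\S^{n-1}$ (the ``angular data'' of a leaf $\Sigma_t$) can be approximated by a $W^{2,p}$ bi-Lipschitz homeomorphism with comparable distortion --- mollification does not preserve injectivity, and this kind of smoothing statement runs into genuine smoothing-theory obstructions in higher dimensions; and (ii) that the interpolations between successive approximations can be chosen so that the bi-Lipschitz constants do not degenerate as the shells accumulate on $\partial D_1$. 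You acknowledge both as ``the main obstacle'' and then stop, so the argument is incomplete exactly where it needs to be complete.

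The paper avoids any such iteration. Following Gehring, it first transplants the problem to a doubly-punctured configuration $(E_1\cup E_2)^c\to(B_1\cup B_2)^c$, normalizes so that the map is the identity outside a ball, and then builds the extension in one explicit step as $G_*=\tau_1\circ g_*^{-1}\circ S\circ g_*$, where $g_*$ is a periodized copy of the data and $S$ is a fixed $C^{1,1}$ shear; the class $\LW$ is preserved because $S$ has bounded second derivatives and the gluing lemmas apply. The passage back to the collar $\bar D_2\setminus D_1\to\bar B_2\setminus B_1$ is then made by conjugating with a generalized inversion $I_{a,r}(x)=r^{a+1}|x|^{-(a+1)}x$ with $a<n/p-1$; the condition $p<n$ enters through the integrability of $|x|^{-1}$ near the origin and the convergence of $\sum_k k^{-(n-p)(1/a+1)}$, i.e.\ through essentially the same dimensional count you identified, but attached to a concrete, finite construction rather than to an iteration whose convergence is never shown. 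To repair your proposal you would need to either carry out the quantitative shell-rounding in full (including the sphere-homeomorphism approximation), or replace it by a device of this kind.
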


The proof 
is an adaptation of Gehring's argument \cite[Thm 2']{Gehring} from the class of quasiconformal homeomorphisms to the class $\LW$.  For the locally bi-Lipschitz class, the extension theorem was known to Sullivan \cite{Sullivan} and later proved by Tukia and V\"{a}is\"{a}l\"{a} \cite[Thm 5.10]{TukiaVaisala}.  For more about quasiconformal homeomorphisms, see \cite{Vaisala}.

As in Gehring's case, Theorem \ref{thm_mainthm} is not quantitative.  His extension depends on the distortion (resp.\ Lipschitz constants) of $g$ as well as the configurations of the collars $D_2 \setminus \bar{D}_1$ and $B_2 \setminus \bar{B}_1$.  In addition, our modification of his extension also depends explicitly on the parameters $p$ and $n$.

\subsection{Motivations, Smoothness, and Sharpness} 

The motivation for Theorem \ref{thm_mainthm} comes from the study of Lipschitz manifolds.
Specifically, Heinonen and Keith have recently shown that \emph{if an $n$-dimensional Lipschitz manifold, for $n \neq 4$, admits an atlas with coordinate charts in the Sobolev class $W^{2,2}_{\loc}(\R^n;\R^n)$, then it admits a smooth structure} \cite{HeinonenKeith}.  

On the other hand, there are $10$-dimensional Lipschitz manifolds without smooth structures \cite{Kervaire}.  This leads to the following question:

\begin{ques}
For $n \neq 4$, does there exist $p \in [1,2)$ so that every $n$-dimensional Lipschitz manifold admits an atlas of charts in $W^{2,p}_{loc}(\R^n;\R^n)$?
\end{ques}  

Sullivan has shown that \emph{every $n$-dimensional topological manifold, for $n \neq 4$, admits a Lipschitz structure} \cite{Sullivan}.  
A key step in the proof is to show that bi-Lipschitz homeomorphisms satisfy a Schoenflies-type extension theorem.  
One may inquire whether this direction of proof would also lead to the desired Sobolev regularity.
Theorem \ref{thm_mainthm} would be a first step in this direction.  For more about Lipschitz structures on manifolds, see \cite{LuukkainenVaisala}.

It is worth noting that Theorem \ref{thm_mainthm} is not generally true for $p > n$.
Recall that for any domain $\Omega$ in $\R^n$, Morrey's inequality \cite[Thm 4.5.3.3]{EvansGariepy} gives $W^{2,p}(\Omega) \hookrightarrow C^{1,1 - n/p}(\Omega)$, so homeomorphisms of class $\LW$ are necessarily $C^1$-diffeomorphisms.

Indeed, every $C^\infty$-diffeomorphism $\varphi : \S^{n-1} \to \S^{n-1}$ admits a radial extension
$$
\bar\varphi(x) \;:=\; |x| \, \varphi \Big( \frac{x}{|x|} \Big)
$$ 
that is, a $C^\infty$-diffeomorphism between round annuli. The validity of Theorem \ref{thm_mainthm}, for $p > n$, would therefore imply that every such $\varphi$ extends to a $C^1$-diffeomorphism of $\bar\B^n$ onto itself.  However, for $n=7$ this conclusion is impossible.	

Recall that every such $\varphi$ also determines a $C^\infty$-smooth, $n$-dimensional manifold $M_\varphi^n$ that is homeomorphic to $\S^n$ \cite[Construction (C)]{Milnor}.  Indeed, $M_\varphi^n$ is the quotient of two copies of $\R^n$ under the relation $x \sim \varphi^*(x)$ on $\R^n \setminus \{0\}$, where
\begin{equation} \label{eq_twistdiffeo}
\varphi^*(x) \;:=\; \frac{1}{|x|} \, \varphi\Big( \frac{x}{|x|} \Big).
\end{equation}
If $\varphi$ is the identity map on $\S^{n-1}$, then $\varphi^*$ is the inversion map $x \mapsto |x|^{-2}x$, and $M_\varphi^n$ is precisely $\S^n$.
By using invariants from differential topology, Milnor proved the following theorem about such manifolds \cite[Thm 3]{Milnor}.

\begin{thm}[Milnor, 1956]
There exist $C^\infty$-smooth manifolds of the form $M_\varphi^7$ that are homeomorphic, but not $C^\infty$-diffeomorphic, to $\S^7$.
\end{thm}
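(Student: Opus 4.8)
The plan is to separate the statement into a soft part --- every such $M_\varphi^7$ is homeomorphic to $\S^7$ --- and a hard part --- for a suitable $\varphi$ there is no $C^\infty$-diffeomorphism $M_\varphi^7 \to \S^7$ --- the latter obstructed by a mod-$7$ invariant coming from the Hirzebruch signature theorem. For the first part, note that the manifold $M_\varphi^n$ assembled from two copies of $\R^n$ along $\R^n\setminus\{0\}$ via \eqref{eq_twistdiffeo} becomes, after restricting each copy to its closed unit ball, the ``twisted sphere'' obtained by gluing two copies of $\bar\B^n$ along their boundary spheres by $\varphi$ itself (on $|x|=1$ one has $\varphi^*(x)=\varphi(x)$). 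Hence $M_\varphi^n$ is homeomorphic to $\S^n$ by the Alexander trick: extend $\varphi:\S^{n-1}\to\S^{n-1}$ radially to a homeomorphism $\hat\varphi:\bar\B^n\to\bar\B^n$, and check that $\id\sqcup\hat\varphi$ descends to a homeomorphism from $M_\varphi^n$ onto the manifold glued by the identity, which is $\S^n$. (Milnor's own route is instead to exhibit a Morse function on $M_\varphi^n$ with exactly two nondegenerate critical points and invoke Reeb's theorem; that is the version I would use in the last step, where $\varphi$ appears only implicitly.) Either way, the argument produces no \emph{diffeomorphism}, and that gap is what the theorem exploits.

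For the second part I would introduce Milnor's invariant $\lambda$ of a closed oriented smooth $7$-manifold $M$ with $H^3(M;\mathbb{Q})=H^4(M;\mathbb{Q})=0$ (automatic for a homotopy sphere). Choose a compact oriented smooth $8$-manifold $W$ with $\del W=M$; such a $W$ exists because $\Omega_7=0$, and for the examples below it is written down explicitly. The vanishing of $H^3(M;\mathbb{Q})$ and $H^4(M;\mathbb{Q})$ makes $H^4(W,M;\mathbb{Q})\to H^4(W;\mathbb{Q})$ an isomorphism, so $p_1(W)$ lifts canonically to $\bar p_1\in H^4(W,M;\mathbb{Q})$; set $q(W):=\langle\bar p_1^{\,2},[W,M]\rangle$ and let $\lambda(M)\in\mathbb{Z}/7$ be the reduction of the integral linear combination of $q(W)$ and the signature $\sigma(W)$ that the degree-$8$ Hirzebruch $L$-polynomial forces to vanish on closed $8$-manifolds. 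The step I expect to be the main obstacle is showing that $\lambda(M)$ is independent of $W$: given two choices $W_1,W_2$, glue them into a closed oriented $8$-manifold $V=W_1\cup_M(-W_2)$; Novikov additivity gives $\sigma(V)=\sigma(W_1)-\sigma(W_2)$, the vanishing of $H^3(M;\mathbb{Q})$ and $H^4(M;\mathbb{Q})$ (via Mayer--Vietoris) identifies $\langle p_1(V)^2,[V]\rangle$ with $q(W_1)-q(W_2)$, and the Hirzebruch signature theorem applied to $V$ shows the relevant combination vanishes mod $7$. Thus $\lambda$ is an invariant of the oriented diffeomorphism type; since $\S^7=\del\bar\B^8$ with $\bar\B^8$ of trivial Pontryagin classes and signature, $\lambda(\S^7)=0$, and because $\lambda$ only changes sign under orientation reversal while $7$ is odd, $\lambda(M)\neq 0$ already rules out any diffeomorphism $M\to\S^7$.

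Finally I would force $\lambda\neq 0$ using Milnor's $3$-sphere bundles over $\S^4$. For odd $k$, let $E_k$ be the total space of the $\S^3$-bundle over $\S^4$ whose clutching map $\S^3\to SO(4)$ is, in quaternions, $u\mapsto(v\mapsto u^{(1+k)/2}\,v\,u^{(1-k)/2})$; its Euler number is $1$, so $E_k$ is a homotopy $7$-sphere, and the height function on $\S^4$ lifts to a Morse function on $E_k$ with two critical points, whence $E_k\cong M_{\varphi_k}^7$ for some smooth $\varphi_k:\S^6\to\S^6$ by the first part. The associated closed $\bar\B^4$-bundle $W_k$ over $\S^4$ has $\del W_k=E_k$; on it $H^4$ is infinite cyclic with intersection form $(1)$, so $\sigma(W_k)=1$, while the bundle's Pontryagin class gives $p_1(W_k)=\pm 2k\,\alpha$ for a generator $\alpha$, hence $q(W_k)=4k^2$. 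Substituting into the definition of $\lambda$ yields $\lambda(M_{\varphi_k}^7)\equiv c\,(k^2-1)\pmod 7$ for a unit $c$ (in Milnor's normalization, exactly $k^2-1$), which is nonzero whenever $k^2\not\equiv 1\pmod 7$ --- for instance $k=3$. For such $k$ the manifold $M_{\varphi_k}^7$ is homeomorphic to $\S^7$ by the first part but, having $\lambda\neq 0=\lambda(\S^7)$, is not $C^\infty$-diffeomorphic to it, which proves the theorem.
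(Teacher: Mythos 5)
The paper itself offers no proof of this statement---it is quoted directly from Milnor's 1956 paper---so the only meaningful comparison is with Milnor's original argument, and your proposal is essentially a faithful reconstruction of it: the invariant $\lambda(M)\in\mathbb{Z}/7$ built from a bounding $8$-manifold $W$ (existence via Thom's $\Omega_7=0$), with $q(W)=\langle\bar p_1^2,[W,M]\rangle$ and $\lambda\equiv 2q(W)-\sigma(W)$, well-definedness by gluing $W_1\cup_M(-W_2)$, Novikov additivity, Mayer--Vietoris, and the degree-$8$ signature formula $45\sigma=7p_2-p_1^2$; then the quaternionic $\S^3$-bundles over $\S^4$ with $h+j=1$, $h-j=k$, giving $\sigma(W_k)=1$, $p_1=\pm2k\alpha$, hence $\lambda\equiv k^2-1\pmod 7$, nonzero for $k=3$; and $\lambda(\S^7)=0$ from $\bar\B^8$. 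That is exactly Milnor's route, and your identification of $M_\varphi^n$ with the twisted sphere $\bar\B^n\cup_\varphi\bar\B^n$ plus the Alexander trick correctly handles the ``homeomorphic to $\S^7$'' half. The one step you state incorrectly is the assertion that ``the height function on $\S^4$ lifts to a Morse function on $E_k$ with two critical points'': a genuine pullback along the bundle projection is constant on fibers, so its critical set consists of two $3$-spheres, not two nondegenerate points. Milnor instead writes down an explicit function (in his charts, $\mathrm{Re}(v)/\sqrt{1+|u|^2}$, where the normalization $h+j=1$ enters) and verifies by hand that it has exactly two nondegenerate critical points; from that Morse function the two-disk decomposition exhibits $E_k$ as a twisted sphere, i.e.\ of the form $M_{\varphi_k}^7$, which is the fact your argument needs. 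With that substitution your outline is correct and coincides with the proof the paper is citing.
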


Such manifolds are better known as \emph{exotic spheres}.  The next lemma is an analogue of \cite[Thm 8.2.1]{Hirsch}; it relates exotic spheres to extension theorems.  

\begin{lemma} \label{lemma_extdiffeo}
Let $\varphi : \S^{n-1} \to \S^{n-1}$ be a $C^\infty$-diffeomorphism and let $\bar\varphi : \bar\B^n \setminus \{0\} \to \bar\B^n \setminus \{0\}$ be its radial (diffeomorphic) extension.  If there exists a $C^1$-diffeomorphism $\Phi : \bar\B^n \to \bar\B^n$ that agrees with $\bar\varphi$ on a neighborhood of $\S^{n-1}$ in $\bar\B^n$, then $M_\varphi^n$ is $C^1$-diffeomorphic to $\S^n$.
\end{lemma}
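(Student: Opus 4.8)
The plan is to realize both $\S^n = M_{\id}^n$ and $M_\varphi^n$ through the two-chart description recalled above, and to build the desired $C^1$-diffeomorphism chart by chart, using $\Phi$ to repair the single point where the naive identification $M_\varphi^n \cong \S^n$ breaks down. Regard $\iota(x) = |x|^{-2}x$ and $\bar\varphi(x) = |x|\,\varphi(x/|x|)$ as self-maps of $\R^n \setminus \{0\}$ (the latter is the radial extension of the statement). A short computation gives the identities $\varphi^* = \iota \circ \bar\varphi$ and $(\varphi^*)^{-1} = (\varphi^{-1})^*$ on $\R^n \setminus \{0\}$, hence also $\iota \circ \bar\varphi \circ (\varphi^{-1})^* = \id$ there. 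By construction $M_\varphi^n$ is covered by two copies $\chi_A, \chi_B \colon \R^n \to M_\varphi^n$ of $\R^n$ with $\chi_A = \chi_B \circ \varphi^*$ on $\R^n \setminus \{0\}$, while $\S^n = M_{\id}^n$ is covered by two such copies $\chi'_A, \chi'_B$ with $\chi'_A = \chi'_B \circ \iota$. Consequently, any pair of $C^1$-diffeomorphisms $\psi_A, \psi_B$ of $\R^n$ satisfying $\psi_B \circ \varphi^* = \iota \circ \psi_A$ on $\R^n \setminus \{0\}$ determines a well-defined $C^1$-diffeomorphism $H \colon M_\varphi^n \to \S^n$ via $H \circ \chi_A = \chi'_A \circ \psi_A$ and $H \circ \chi_B = \chi'_B \circ \psi_B$; producing such a pair is the whole task.

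Before that, I would normalize $\Phi$. Since the conclusion does not mention $\Phi$, it suffices to prove the lemma after replacing $\Phi$ by $\rho \circ \Phi$ for some $C^\infty$-diffeomorphism $\rho$ of $\bar\B^n$ equal to the identity near $\S^{n-1}$; such $\rho \circ \Phi$ is again a $C^1$-diffeomorphism of $\bar\B^n$ agreeing with $\bar\varphi$ near $\S^{n-1}$. As $\Phi(0) \in \B^n$ and the diffeomorphism group of $\bar\B^n$ acts transitively on $\B^n$ through maps fixing a collar of $\S^{n-1}$ (for instance by flowing along a vector field compactly supported in $\B^n$), we may and do assume $\Phi(0) = 0$. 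Fix $\eta \in (0,1)$ with $\Phi = \bar\varphi$ on $\{\, 1 - \eta \le |x| \le 1 \,\}$.

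Now I would build the chart maps. Set $\psi_A = \Phi$ on $\bar\B^n$ and $\psi_A = \bar\varphi$ on $\{ |x| \ge 1 \}$; both pieces coincide with the smooth map $\bar\varphi$ on a neighborhood of $\S^{n-1}$, so they glue $C^1$-smoothly, and $C^1$-regularity at $0$ holds because there $\psi_A = \Phi$ with $\Phi(0) = 0$; thus $\psi_A$ is a $C^1$-diffeomorphism of $\R^n$. Set $\psi_B = \iota \circ \psi_A \circ (\varphi^*)^{-1}$ on $\R^n \setminus \{0\}$. Wherever $\psi_A = \bar\varphi$, the identity $\iota \circ \bar\varphi \circ (\varphi^{-1})^* = \id$ forces $\psi_B = \id$, so $\psi_B$ is the identity on a neighborhood of $\bar\B^n$, while on $\{ |x| \ge 1 \}$ it equals $\iota \circ \Phi \circ (\varphi^{-1})^*$, a $C^1$-diffeomorphism of $\{ |x| \ge 1 \}$ onto itself --- and it is here that $\Phi(0) = 0$ keeps $\iota$ applicable. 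These two descriptions agree on an overlapping annulus, so $\psi_B$ is a $C^1$-diffeomorphism of $\R^n$. Finally the required identity $\psi_B \circ \varphi^* = \iota \circ \psi_A$ is verified by splitting into the cases $|x| \le 1$ and $|x| \ge 1$ and substituting the definitions together with the relations above; the resulting $H$ is the asserted $C^1$-diffeomorphism from $M_\varphi^n$ onto $\S^n$.

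The crux --- and the only place the hypothesis is used at full strength --- is global $C^1$-regularity at the ``point at infinity'' of $M_\varphi^n$. The naive guess $\psi_A = \bar\varphi$, $\psi_B = \id$ fails solely because $\bar\varphi$ is not differentiable at the origin, and the content of the hypothesis is that $\Phi$ provides a $C^1$ replacement for $\bar\varphi$ near $0$ that still agrees with $\bar\varphi$ near the equator sphere, so that the collapse $\iota \circ \bar\varphi \circ (\varphi^{-1})^* = \id$ survives on an overlap and lets the two charts be glued $C^1$-smoothly. Since $\Phi$ is only assumed $C^1$, no smoothing is available, so each gluing --- across $\{ |x| = 1 \}$ and at the origin --- must be checked to be genuinely $C^1$; the normalization $\Phi(0) = 0$ is exactly what legitimizes the inversion appearing in $\psi_B$.
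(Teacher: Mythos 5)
Your proof is correct and follows essentially the same route as the paper: both exploit the two-chart descriptions of $M_\varphi^n$ and of $\S^n$ together with the identity $\iota \circ \bar\varphi = \varphi^*$ (equivalently $(\id^*)^{-1} \circ \varphi^* = \bar\varphi$), and both use $\Phi$ on the chart containing the extra point, with the agreement $\Phi = \bar\varphi$ near $\S^{n-1}$ providing the $C^1$ gluing. The paper's version is marginally more direct---it maps chart $1$ by the identity (composed with stereographic projection) and chart $2$ by $\Phi$, so no normalization $\Phi(0)=0$ and no hybrid map $\psi_A$ are needed---but the underlying construction is the same, and your extra care with the chart domains and the gluing is a reasonable way to make the brief argument fully explicit.
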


\begin{proof}[Proof of Lemma \ref{lemma_extdiffeo}]
Let $\varphi^*$ be the diffeomorphism defined in Equation \eqref{eq_twistdiffeo}.  By construction, there is an atlas of charts $\{M_i\}_{i=1}^2$ for $M_\varphi^n$ with homeomorphisms $\psi_i : M_i \to \R^n$ that satisfy $\psi_1 \circ \psi_2^{-1} = \varphi^*$.

Let $\pi_1, \pi_2 : \R^n \to \S^n$ be stereographic projections relative to the ``north'' and ``south'' poles on $\S^n$, respectively, so 
$\pi_2^{-1} \circ \pi_1 = \id^* = (\id^*)^{-1}$.  
Observe that	
$$
((\id^*)^{-1} \circ \, \varphi^*)(x) \;=\;
\frac{\varphi^*(x)}{|\varphi^*(x)|^2} \;=\;
|x| \, \varphi\Big( \frac{x}{|x|} \Big) \;=\; 
\bar\varphi(x)
$$
holds for all $x \in \R^n \setminus \{0\}$.  It follows that
$$
x \;\mapsto\;
\left\{\begin{array}{ll}
(\pi_1^{-1} \circ \psi_1)(x), & \textrm{if } x \in M_1 \\
(\pi_2^{-1} \circ \Phi \circ \psi_2)(x), & \textrm{if } x \in M_2
\end{array}
\right.
$$
is a $C^1$-diffeomorphism of $M_\varphi^n$ onto $\S^n$.
\end{proof}

By \cite[Thm 2.2.10]{Hirsch}, if two $C^\infty$-smooth manifolds are $C^1$-diffeomorphic, then they are $C^\infty$-diffeomorphic.  It follows that there exist $C^1$-diffeomorphisms of collars in $\R^7$ that do not admit diffeomorphic extensions of class $\LW$, for any $p > 7$.

The next result follows from the inclusion $W^{2,p}_{loc}(\Omega;\Omega') \subseteq W^{2,q}_{loc}(\Omega;\Omega')$, for $q \leq p$.

\begin{cor}
Let $n = 7$.  For $p > n$, there exist collars $\Omega$, $\Omega'$ in $\R^n$ and homeomorphisms $\varphi : \Omega \to \Omega'$ of class $\LW$ that admit homeomorphic extensions of class $LW^q_2$, for every $1 \leq q < n$, but not of class $\LW$.
\end{cor}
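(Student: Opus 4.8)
The strategy is to use an exotic $7$-sphere as an obstruction, following the discussion preceding the corollary. Fix, via Milnor's theorem, a $C^\infty$-diffeomorphism $\psi : \S^{6} \to \S^{6}$ for which $M^{7}_{\psi}$ is homeomorphic but not $C^\infty$-diffeomorphic to $\S^{7}$, and let $\bar\psi(x) = |x|\,\psi(x/|x|)$ be its radial extension. Then $\bar\psi$ is a $C^\infty$-diffeomorphism of $\R^{7}\setminus\{0\}$ that preserves every sphere $\{|x|=r\}$ and is positively homogeneous of degree $1$, so $D\bar\psi$ is homogeneous of degree $0$ and $D^2\bar\psi$ of degree $-1$. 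Put $D_1=B_1=\{|x|<\tfrac12\}$ and $D_2=B_2=\B^{7}$, let $\Omega=\Omega'=D_2\setminus\bar D_1$ be the round collar between these spheres, and define $\varphi := \bar\psi|_{\Omega}$. This is a homeomorphism of $\Omega$ onto $\Omega'$ with $\varphi(\partial D_i)=\partial B_i$; since $\bar\psi$ is $C^\infty$ on a neighborhood of the compact set $\bar\Omega$, the map $\varphi$ is locally bi-Lipschitz and has bounded derivatives, hence is of class $LW^{r}_2$ for every $r\in[1,\infty)$, in particular of class $\LW$ for the given $p>7$.

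First I would dispatch the positive direction. Since $\varphi$ (being smooth) is of class $LW^q_2$ for every $q\in[1,7)$ --- as is also visible from the inclusion $W^{2,p}_{\loc}\subseteq W^{2,q}_{\loc}$ for $q\le p$ --- and since $\bar\psi|_{\bar D_2\setminus D_1}$ carries $\partial D_i$ to $\partial B_i$, Theorem \ref{thm_mainthm} applied with $q$ in place of $p$ produces a homeomorphism $F_q:\bar D_2\to\bar B_2$ of class $LW^q_2$ agreeing with $\varphi$ on a neighborhood of $\partial D_2$. (Alternatively and more concretely: the full radial extension $\bar\psi:\bar\B^{7}\to\bar\B^{7}$ is itself a homeomorphism restricting to $\varphi$ on $\Omega$; by the degree-$(-1)$ homogeneity one has $|D^2\bar\psi(x)|\le C|x|^{-1}$, which is $q$-integrable over $\B^{7}$ precisely when $q<7$, and the same bound holds for $\bar\psi^{-1}=\overline{\psi^{-1}}$, while both maps are locally bi-Lipschitz on $\B^7$ by homogeneity and compactness of $\S^6$; hence $\bar\psi$ is of class $LW^q_2$ for every $q\in[1,7)$.)

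The content of the corollary is the negative direction, so suppose towards a contradiction that for some $p>7$ there were a homeomorphism $F:\bar D_2\to\bar B_2$ of class $\LW$ agreeing with $\varphi$ on a neighborhood of $\partial D_2$; recall $\bar D_2=\bar B_2=\bar\B^{7}$. By the definition of class $\LW$, the maps $F$ and $F^{-1}$ lie in $W^{2,p}_{\loc}$ on the respective interiors, so Morrey's inequality \cite[Thm 4.5.3.3]{EvansGariepy} forces $F,F^{-1}\in C^{1}$ there and makes $F$ a $C^{1}$-diffeomorphism of $\B^{7}$. On a neighborhood of $\partial D_2=\S^{6}$ in $\bar\B^{7}$ one has $F=\bar\psi$, which is $C^\infty$ up to $\S^{6}$, and likewise $F^{-1}=\bar\psi^{-1}$ there; thus $F$ is a $C^{1}$-diffeomorphism of $\bar\B^{7}$ onto itself agreeing with $\bar\psi$ on a neighborhood of $\S^{6}$. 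By Lemma \ref{lemma_extdiffeo} (with $\psi$ in the role of $\varphi$), $M^{7}_{\psi}$ is $C^{1}$-diffeomorphic to $\S^{7}$, hence $C^\infty$-diffeomorphic to $\S^{7}$ by \cite[Thm 2.2.10]{Hirsch}, contradicting the choice of $\psi$. Therefore no such $F$ exists, and $\Omega,\Omega',\varphi$ have all the stated properties.

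The one delicate point is the passage, in the last paragraph, from a hypothetical $\LW$-extension to a $C^{1}$-diffeomorphism of the \emph{closed} ball, which is what Lemma \ref{lemma_extdiffeo} requires: Morrey's inequality yields $C^{1}$-regularity only on the open ball, and one genuinely needs the agreement $F=\bar\psi$ near $\partial D_2$ furnished by Theorem \ref{thm_mainthm}, together with the smoothness of the radial extension up to $\S^{6}$, to upgrade this to $C^{1}$-smoothness up to the boundary. The remaining steps are bookkeeping.
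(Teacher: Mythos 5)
Your proposal is correct and follows essentially the same route as the paper: a Milnor exotic diffeomorphism of $\S^6$, radially extended over a round collar, with the positive direction coming from the inclusion $W^{2,p}_{\loc}\subseteq W^{2,q}_{\loc}$ together with Theorem \ref{thm_mainthm} (or the explicit degree-one homogeneous extension), and the negative direction from Morrey's inequality, Lemma \ref{lemma_extdiffeo}, and \cite[Thm 2.2.10]{Hirsch}. You simply fill in details the paper leaves implicit, notably the upgrade to $C^1$-regularity up to $\S^6$ via the agreement with the smooth radial map near the boundary.
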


Since the above discussion relies crucially on Sobolev embedding theorems, it leaves open the borderline case $p=n$.

\begin{ques}
Is Theorem \ref{thm_mainthm} true for the case $p = n$?
\end{ques}

For $p > n$, the main obstruction to an extension theorem is the existence of exotic $n$-spheres.  It is known that no exotic spheres exist for $n = 1, 2, 3, 5, 6$ \cite{KervaireMilnor}, and the case $n=1$ can be done by hand.
It would be interesting to determine whether other geometric obstructions arise.

\begin{ques}
For $n = 2, 3, 5, 6$, is Theorem \ref{thm_mainthm} true for all $p \geq 1$?
\end{ques}

The outline of the paper is as follows.  In Section 2 we review basic facts about Lipschitz mappings, Sobolev spaces, and the class $\LW$.  In Section 3 we prove extension theorems in the setting of doubly-punctured domains.  Section 4 addresses the case of homeomorphisms between collars, by employing suitable generalizations of inversion maps and reducing to previous cases.

\subsection{Acknowledgments}
The author is especially indebted to his late advisor and teacher, Juha
Heinonen, for numerous insightful discussions and for directing him in this area
of research.  He thanks Piotr Haj{\l}asz for many helpful conversations, which
led to key improvements of this work.   He also thanks Leonid Kovalev, Jani
Onninen, Pekka Pankka, Mikko Parviainen, and Axel Straschnoy for their helpful comments and suggestions on a preliminary version of this work.

The author acknowledges the kind hospitality of the University of Michigan and the Universitat Aut\`onoma de Barcelona, where parts of this paper were written.  This project was partially supported by NSF grant DMS-0602191.

\section{Notation and Basic Facts}

For $A \subset \R^n$, we write $A^c$ for the complement of $A$ in $\R^n$.  The open unit ball in $\R^n$ is denoted $\B^n$; if the dimension is understood, we will write $\B$ for $\B^n$.

We write $A \lesssim B$ for inequalities of the form $A \leq k B$, where $k$ is a fixed dimensional constant and does not depend on $A$ or $B$.

For domains $\Omega$ and $\Omega'$ in $\R^n$, recall that a map $f : \Omega \to \Omega'$ is \emph{Lipschitz} whenever
$$
L(f) \;:=\: 
\sup\left\{ \frac{ |f(x)-f(y)| }{ |x-y| } \,:\, x,y \in \Omega,\, x \neq y \right\} \;<\; \infty.
$$
The map $f$ is \emph{locally Lipschitz} if every point in $\Omega$ has a neighborhood on which $f$ is Lipschitz.  A homeomorphism $f : \Omega \to \Omega'$ is \emph{bi-Lipschitz} (resp.\ \emph{locally bi-Lipschitz}) if $f$ and $f^{-1}$ are both Lipschitz (resp.\ locally Lipschitz); compare Equation \eqref{eq_bilip}.

The following lemmas about bi-Lipschitz maps are used in Section 2.  The first is a special case of \cite[Lemma 2.17]{TukiaVaisala}; the second one is elementary, so we omit the proof.

\begin{lemma}[Tukia-V{\"a}is{\"a}l{\"a}] \label{lemma_loclip}
Let $O$ and $O'$ be open, connected sets in $\R^n$ and let $K$ be a compact subset of $O$.  If $f : O \to O'$ is locally bi-Lipschitz, then $f|K$ is bi-Lipschitz, where $L((f|K)^{-1})$ depends only on $O$, $K$, and $L(f)$.
\end{lemma}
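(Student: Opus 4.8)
The plan is to reduce everything to one co-Lipschitz estimate on $K$. Since $f$ is locally Lipschitz and $K$ compact, $L(f|K) \le L(f)$ is automatic, so it suffices to find a constant $c = c(O,K,L(f)) > 0$ with $|f(x)-f(y)| \ge c\,|x-y|$ for all $x,y \in K$ (equivalently $L((f|K)^{-1}) \le c^{-1}$). That \emph{some} $c > 0$ works is immediate by compactness: if not, pick $x_j, y_j \in K$ with $|f(x_j)-f(y_j)| \le j^{-1}|x_j-y_j|$ and pass to subsequences $x_j \to a$, $y_j \to b$ in $K$; if $a \ne b$ the ratio tends to $|f(a)-f(b)|/|a-b| > 0$ by injectivity of $f$, and if $a = b$ the ratio is eventually $\ge L_a^{-1}$, where $f$ is $L_a$-bi-Lipschitz near $a$ --- both contradictions. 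So $f|K$ is bi-Lipschitz. What this argument does \emph{not} see is the asserted dependence on $O$, $K$, and $L(f)$ alone: $L_a$ is a feature of the particular map and the scheme is not quantitative.

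Extracting that quantitative dependence is the step I expect to be the main obstacle; the difficulty is that the local bi-Lipschitz neighborhoods supplied by the hypothesis may have uncontrollably small radii, while $c$ is to depend only on the \emph{size} data of $O, K$ and the \emph{distortion} datum $L(f)$. The plan is to fix, once and for all, a connected compact set $\widehat K$ with $K \subset \operatorname{int}\widehat K \subset \widehat K \subset O$, and to set $\delta := \dist(\widehat K,\partial O)$ and $R := \operatorname{diam}\widehat K$, both depending only on $O$ and $K$. Since $f$ is locally bi-Lipschitz it is differentiable with $\|Df(x)\| \le L(f)$ and $\|Df(x)^{-1}\| \le L(f)$ at a.e.\ $x \in O$; consequently $f$ is $L(f)$-Lipschitz on convex subsets of $O$, $f^{-1}$ is $L(f)$-Lipschitz on convex subsets of $O'$, and $f$ is quasiconformal with dilatation at most $L(f)^{2n}$. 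The crux is then to upgrade ``$f$ is bi-Lipschitz on \emph{some} ball about each point'' to ``$f$ is $L'$-bi-Lipschitz on $B(z,c_1\delta)$ for \emph{every} $z \in K$'', with $L'$ and $c_1$ depending only on $n$ and $L(f)$: for this I would feed the pointwise bounds on $\|Df\|$ and $\|Df^{-1}\|$ into the standard quasiconformal distortion estimates for images of round balls centered in $\widehat K$, which in addition furnish a lower bound $\dist(f(K),\partial O') \ge \rho = \rho(n,L(f),\delta) > 0$. One may instead invoke \cite[Lemma 2.17]{TukiaVaisala} directly, of which the present lemma is the special case stated above.

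Granting this upgrade, the conclusion is a short case split. For $x,y \in K$ with $|x-y| < c_1\delta$, both points lie in $B(x,c_1\delta)$, so $|f(x)-f(y)| \ge (L')^{-1}|x-y|$. For $|x-y| \ge c_1\delta$: if $|f(x)-f(y)| < \rho$, then the segment $[f(x),f(y)]$ lies in $O'$ --- it remains within $\rho \le \dist(f(x),\partial O')$ of $f(x)$ --- so $|x-y| = |f^{-1}(f(x)) - f^{-1}(f(y))| \le L(f)\,|f(x)-f(y)|$; otherwise $|f(x)-f(y)| \ge \rho$. As $|x-y| \le R$ in every case, combining yields
\begin{equation*}
L\big((f|K)^{-1}\big) \;\le\; \max\Big\{\,L',\ \frac{R}{\min\{\,L(f)^{-1}c_1\delta,\ \rho\,\}}\,\Big\},
\end{equation*}
and each quantity on the right depends only on $n$, $O$, $K$, and $L(f)$.
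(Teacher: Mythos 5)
A preliminary remark: the paper offers no proof of this lemma at all --- it is quoted as a special case of \cite[Lemma 2.17]{TukiaVaisala} --- so your fallback of ``invoking \cite[Lemma 2.17]{TukiaVaisala} directly'' is precisely what the paper does, and there is no internal argument to compare yours against. Judged on its own, your plan has the right skeleton: the endgame (split according to whether $|f(x)-f(y)|$ lies below a threshold $\rho\le\dist(f(K),\partial O')$, chain the local Lipschitz bound for $f^{-1}$ along the segment $[f(x),f(y)]\subset O'$ in the first case, use $|x-y|\le\operatorname{diam}K$ in the second) is exactly how one proves such statements. But two points need attention. First, the assertion $\|Df(x)^{-1}\|\le L(f)$ a.e.\ does not follow from local bi-Lipschitzness together with the paper's definition of $L(f)$ as the forward Lipschitz constant: the local inverse constants are not controlled by $L(f)$, and under that literal reading the lemma itself fails --- take $f=\epsilon\,\id$ on a neighborhood of $K$, interpolated radially near $\partial O$ so that $L(f)=1$; then $O$, $K$, $L(f)$ are fixed while $L((f|K)^{-1})=\epsilon^{-1}\to\infty$. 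What is meant, and what Tukia--V\"ais\"al\"a assume, is that $f$ is locally $L$-bi-Lipschitz with one uniform two-sided constant $L$, and the conclusion depends on that $L$. Your entire quantitative scheme rests on this reading (all you actually use is that $f^{-1}$ is locally $L$-Lipschitz on $O'$), so it must be made explicit.

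Second, both quantitative ingredients are asserted rather than proved, and the one you single out as the crux is the one you do not need. The ``egg-yolk'' upgrade to a uniform bi-Lipschitz bound on balls $B(z,c_1\delta)$ can be discarded: whenever $|f(x)-f(y)|<\rho\le\dist(f(x),\partial O')$, the segment $[f(x),f(y)]$ lies in $O'$ and chaining the local $L$-Lipschitz property of $f^{-1}$ along it gives $|x-y|\le L\,|f(x)-f(y)|$ no matter how small $|x-y|$ is; combined with $|f(x)-f(y)|\ge\rho\ge(\rho/\operatorname{diam}K)\,|x-y|$ in the opposite case, this already yields $L((f|K)^{-1})\le\max\{L,\operatorname{diam}K/\rho\}$. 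The only estimate that genuinely requires proof is the lower bound $\dist(f(K),\partial O')\ge\rho(O,K,L)$, which you also delegate to unspecified quasiconformal distortion estimates; it has a short elementary proof. With $\delta:=\dist(K,\partial O)$ and $x\in K$, the ball $B:=B(x,\delta/2)$ satisfies $\bar B\subset O$, hence $\partial f(B)\subset f(\partial B)$; given $q\in\partial O'$, the segment from $f(x)$ to $q$ first exits the open set $f(B)$ at some $w\in f(\partial B)$, and chaining the local Lipschitz bound of $f^{-1}$ along $[f(x),w)$ gives $\delta/2=|x-f^{-1}(w)|\le L\,|f(x)-w|\le L\,|f(x)-q|$, so $\rho:=\delta/(2L)$ works (if $\partial O'=\emptyset$ there is nothing to prove). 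Inserting this, and deleting the opening compactness argument (which, as you yourself note, yields nothing quantitative), your proof closes completely with constant $\max\{L,\,2L\operatorname{diam}K/\delta\}$, and it needs neither quasiconformality nor a.e.\ differentiability --- arguably cleaner than outsourcing the statement to \cite{TukiaVaisala} as the paper does.
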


\begin{lemma} \label{lemma_gluelip}
For $i = 1, 2$, let $h_i : \Omega_i \to \R^n$ be locally bi-Lipschitz embeddings so that $h_1(\Omega_1 \setminus \Omega_2) \cap h_2(\Omega_2 \setminus \Omega_1) = \emptyset$.  If $h_1 = h_2$ holds on all of $\Omega_1 \cap \Omega_2$, then
$$
h(x) \;=\; 
\left\{
\begin{array}{ll}
h_1(x), & \textrm{ if } x \in \Omega_1 \\
h_2(x), & \textrm{ if } x \in \Omega_2 \setminus \Omega_1
\end{array}
\right.
$$
is also a locally bi-Lipschitz embedding.
\end{lemma}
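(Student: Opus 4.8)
The plan is to verify, in order, that $h$ is well-defined, that it is locally bi-Lipschitz on the open set $\Omega_1 \cup \Omega_2$, and that it is globally injective; the embedding property then drops out. Well-definedness is immediate, since on $\Omega_1 \cap \Omega_2$ the two branches agree by hypothesis.

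For the local bi-Lipschitz property I would distinguish two kinds of points. If $x_0 \in \Omega_1$, pick a ball $B \subseteq \Omega_1$ around $x_0$; there $h = h_1$, which is locally bi-Lipschitz. If $x_0 \in \Omega_2 \setminus \Omega_1$, pick a ball $B$ around $x_0$ with $B \subseteq \Omega_2$ (possible since $\Omega_1 \cup \Omega_2$ is open and $x_0 \notin \Omega_1$); the point is that $h = h_2$ on all of $B$, because at points of $B \cap \Omega_1 \subseteq \Omega_1 \cap \Omega_2$ one has $h = h_1 = h_2$, and at the remaining points of $B$ one has $h = h_2$ by definition. So near every point of $\Omega_1 \cup \Omega_2$, $h$ agrees with one of the $h_i$ on a ball; shrinking the ball if needed (for instance via Lemma \ref{lemma_loclip}) gives, around each point, a ball on which $h$ is bi-Lipschitz with a finite constant.

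Global injectivity is the only step that uses the disjointness hypothesis, and it is the crux of the lemma, though a mild one. Suppose $h(x) = h(y)$. If $x, y \in \Omega_1$, injectivity of $h_1$ gives $x = y$; if $x, y \in \Omega_2 \setminus \Omega_1 \subseteq \Omega_2$, injectivity of $h_2$ gives $x = y$. Otherwise, say $x \in \Omega_1$ and $y \in \Omega_2 \setminus \Omega_1$. If $x \in \Omega_1 \cap \Omega_2$, then $h_2(x) = h_1(x) = h(y) = h_2(y)$ forces $x = y$, contradicting $y \notin \Omega_1$. If instead $x \in \Omega_1 \setminus \Omega_2$, then $h_1(x) = h_2(y)$ produces a common point of $h_1(\Omega_1 \setminus \Omega_2)$ and $h_2(\Omega_2 \setminus \Omega_1)$, contradicting the hypothesis. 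Hence $h$ is injective.

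It remains to see that $h$ is an embedding with locally Lipschitz inverse. Since $h$ is continuous and injective on an open subset of $\R^n$, invariance of domain makes it an open map, hence a homeomorphism onto its open image. Given $w_0 = h(x_0)$, choose a ball $B$ around $x_0$ on which $h$ equals some $h_i$ and is bi-Lipschitz; by global injectivity, $h^{-1}$ restricted to the open neighborhood $h(B)$ of $w_0$ coincides with $(h|_B)^{-1}$, which is Lipschitz. Thus $h^{-1}$ is locally Lipschitz, and $h$ is a locally bi-Lipschitz embedding. The main obstacle, such as it is, is bookkeeping: remembering that $\Omega_2 \setminus \Omega_1$ need not be open, so the local identification of $h$ with $h_2$ must be carried out on a ball inside $\Omega_2$ rather than inside $\Omega_2 \setminus \Omega_1$, and invoking the disjointness hypothesis in precisely the one case where it is needed.
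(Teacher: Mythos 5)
Your proof is correct and complete; the paper omits the proof of this lemma as ``elementary,'' and your argument --- showing $h$ agrees with $h_1$ or $h_2$ on a ball around each point (the key observation being that on a ball $B \subseteq \Omega_2$ about a point of $\Omega_2 \setminus \Omega_1$ one has $h = h_2$ because $B \cap \Omega_1 \subseteq \Omega_1 \cap \Omega_2$), using the disjointness hypothesis only in the mixed case of the injectivity argument, and then invoking invariance of domain to get the embedding and the locally Lipschitz inverse --- is exactly the routine verification the authors intended. One cosmetic remark: the ball $B \subseteq \Omega_2$ around $x_0 \in \Omega_2 \setminus \Omega_1$ exists simply because $\Omega_2$ is open and $x_0 \in \Omega_2$, not because $\Omega_1 \cup \Omega_2$ is open.
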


For $f \in W^{2,p}(\Omega;\Omega')$, we will use the Hilbert-Schmidt norm for the 
weak derivatives $Df(x) := [\del_jf_i(x)]_{i,1=1}^n$ and $D^2f(x) := [\del_k\del_jf_i(x)]_{i,j,k=1}^n$.  That is,
$$
|Df(x)| \;:=\; \Big[ \sum_{i,j=1}^n |\del_jf_i(x)|^2 \Big]^{1/2}, \; \;
|D^2f(x)| \;:=\; \Big[ \sum_{i,j,k=1}^n |\del_k\del_jf_i(x)|^2 \Big]^{1/2}.
$$

In what follows, we will use basic facts about Sobolev spaces, such as the change of variables formula \cite[Thm 2.2.2]{Ziemer} and that Lipschitz functions on $\Omega$ are characterized by the class $W^{1,\infty}(\Omega)$ \cite[Thm 4.2.3.5]{EvansGariepy}.  The lemma below gives a gluing procedure for Sobolev functions.

\begin{lemma} \label{lemma_gluing}
For $i=1,2$, let $O_i$ be a domain in $\R^n$ and let $f_i \in W^{1,p}_{loc}(O_i)$.  If $f_1 = f_2$ holds a.e.\ on $O_1 \cap O_2$, then $\chi_{O_1}f_1 + \chi_{O_2 \setminus O_1}f_2 \in W^{1,p}_{loc}(O_1 \cup O_2)$.
\end{lemma}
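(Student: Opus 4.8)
The plan is to reduce the claim to a pointwise, local statement and then invoke the elementary fact that $W^{1,p}_{loc}$ is a \emph{local} property. First I would record the following standard principle: a function $g$ on an open set $U \subseteq \R^n$ lies in $W^{1,p}_{loc}(U)$ as soon as every point of $U$ has an open neighborhood $V \subset U$ on which $g \in W^{1,p}(V)$. The justification is routine: on overlapping such neighborhoods the weak gradients agree a.e.\ by uniqueness of weak derivatives, so they patch together to a single $L^p_{loc}(U)$ vector field $v$; and for any $\phi \in C_c^\infty(U)$ one recovers the integration-by-parts identity $\int g\,\partial_j\phi = -\int v_j\,\phi$ over all of $U$ by applying a smooth partition of unity subordinate to a finite subcover of $\spt\phi$ and summing the local identities.

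Next I would set $f := \chi_{O_1}f_1 + \chi_{O_2 \setminus O_1}f_2$ and verify the Sobolev property on a ball about an arbitrary $x_0 \in O_1 \cup O_2$, splitting into two cases. If $x_0 \in O_1$, openness of $O_1$ gives $r>0$ with $B(x_0,r) \subset O_1$, and on that ball $f$ coincides identically with $f_1$, so $f \in W^{1,p}(B(x_0,r))$. If instead $x_0 \in O_2 \setminus O_1$, openness of $O_2$ gives $r>0$ with $B(x_0,r) \subset O_2$; on $B(x_0,r) \cap O_1 \subseteq O_1 \cap O_2$ the hypothesis forces $f = f_1 = f_2$ a.e., while on $B(x_0,r) \setminus O_1$ one has $f = f_2$, so $f = f_2$ a.e.\ throughout $B(x_0,r)$ and again $f \in W^{1,p}(B(x_0,r))$. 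The local principle above then yields $f \in W^{1,p}_{loc}(O_1 \cup O_2)$.

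I do not expect a genuine obstacle here. The only place a distributional-measure (and hence non-$W^{1,p}$) contribution could arise is across the seam $\partial O_1 \cap O_2$, and the compatibility hypothesis $f_1 = f_2$ a.e.\ on $O_1 \cap O_2$ is exactly what eliminates it, collapsing the analysis to the two easy cases above. The one step that deserves a careful write-up is the local characterization of $W^{1,p}_{loc}$, since it is what licenses checking the conclusion one ball at a time; everything else follows immediately from openness of $O_1$ and $O_2$ together with the hypothesis.
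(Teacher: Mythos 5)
Your argument is correct and follows essentially the same route as the paper: both reduce to balls contained entirely in $O_1$ or in $O_2$ (where the glued function equals $f_1$ or, using the a.e.\ compatibility on $O_1\cap O_2$, equals $f_2$), and both patch the local information together with a partition of unity, which in your write-up is hidden inside the "standard local characterization" of $W^{1,p}_{loc}$. The only cosmetic point is to choose the radius so that $\overline{B(x_0,r)}$ is contained in the relevant $O_i$, so that $f_i \in W^{1,p}(B(x_0,r))$ follows directly from $f_i \in W^{1,p}_{loc}(O_i)$.
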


\begin{proof}
Let $O$ be a bounded domain in $\R^n$ so that $\bar{O} \subset O_1 \cup O_2$.  For each $x \in O$, there exists $r > 0$ so that $B(x,r)$ lies entirely in $O_1$ or in $O_2$.  Since $\bar{O}$ is compact, there exists $N \in \N$ and a collection of balls $\{B(x_i,r_i)\}_{i=1}^N$ whose union covers $O$.

Let $\{\varphi_i\}_{i=1}^N$ be a smooth partition of unity that is subordinate to the cover $\{B(x_i,r_i)\}_{i=1}^N$.  For each $i =1, 2, \ldots, N$, one of $f_1\varphi_i$ or $f_2\varphi_i$ is well-defined and lies in $W^{1,p}(O)$; call it $\psi_i$.  We now observe that $\psi := \sum_{i=1}^N \psi_i$ also lies in $W^{1,p}(O)$ and by construction, it agrees with $\chi_{O_1}f_1 + \chi_{O_2 \setminus O_1}f_2$.
\end{proof}

It is a fact that the class $\LW$ is preserved under composition.  This is stated as a lemma below, and it follows directly from the product rule \cite[Thm 4.2.2.4]{EvansGariepy} and the change of variables formula \cite[Thm 2.2.2]{Ziemer}.

\begin{lemma} \label{lemma_changevari}
Let $p \geq 1$.  If $f : \Omega \to \Omega'$ and $g : \Omega' \to \Omega''$ are homeomorphisms of class $\LW$, then so is $h := g \circ f$. 
In addition, for a.e.\ $x \in \Omega$ and for all $i,j,k \in \{1, \cdots n\}$, the weak derivatives satisfy
\begin{equation}
\label{eq_changevari}
\left\{\begin{split}
\del_jh_i(x) &\;=\;
\sum_{l=1}^n \del_lg_i(f(x)) \del_jf_l(x) \\
\del^2_{kj}h_i(x) &\;=\;
\sum_{l=1}^n \Big[ 
\del_lg_i(f(x)) \del^2_{kj}f_l(x) + 
\sum_{m=1}^n \del^2_{ml}g_i(f(x)) \del_kf_m(x) \del_jf_l(x)
\Big].
\end{split}\right.
\end{equation}
\end{lemma}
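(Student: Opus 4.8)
\emph{Proof proposal.} The plan is to deduce everything from the two cited facts, the product rule \cite[Thm 4.2.2.4]{EvansGariepy} and the change-of-variables/chain rule for Sobolev functions precomposed with bi-Lipschitz maps \cite[Thm 2.2.2]{Ziemer}, while exploiting the symmetry $h^{-1} = f^{-1} \circ g^{-1}$. Indeed, once one shows that the composition of any two class-$\LW$ homeomorphisms lies in $W^{2,p}_\loc$ with the asserted derivative formula, applying this to the pair $(g^{-1}, f^{-1})$ shows $h^{-1} \in W^{2,p}_\loc(\Omega'';\Omega)$; so it is enough to prove $h \in W^{2,p}_\loc(\Omega;\Omega'')$ and establish \eqref{eq_changevari}. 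I first note that $h = g \circ f$ is a homeomorphism and is locally bi-Lipschitz: given $z \in \Omega$, choose a neighborhood $O$ of $z$ on which $f$ satisfies \eqref{eq_bilip} and a neighborhood $O'$ of $f(z)$ on which $g$ satisfies \eqref{eq_bilip}, shrink $O$ so that $f(O) \subset O'$, and compose the two estimates; the same argument applied to $h^{-1}$ handles the inverse. In particular each $h_i \in W^{1,\infty}_\loc(\Omega)$, so $Dh \in L^\infty_\loc$, and likewise $Df, Dg \in L^\infty_\loc$.

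Next I differentiate in two stages. Because $f$ is locally bi-Lipschitz, it enjoys Lusin's property (N), and its Jacobian is bounded away from $0$ and $\infty$ on compacta; hence for any $u \in W^{1,p}_\loc(\Omega')$ the composition $u \circ f$ lies in $W^{1,p}_\loc(\Omega)$ with $\del_k(u \circ f)(x) = \sum_m (\del_m u)(f(x))\,\del_k f_m(x)$ for a.e.\ $x$ — this is the change-of-variables input, and one proves it by approximating $u$ in $W^{1,p}_\loc(\Omega')$ by smooth maps, for which the chain rule holds classically a.e., and passing to the limit using \cite[Thm 2.2.2]{Ziemer} to control $\|v \circ f\|_{L^p(K)}$ by $\|v\|_{L^p(f(K))}$. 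Applied with $u = g_i$, this gives the first line of \eqref{eq_changevari}. Applied with $u = \del_l g_i$ — which lies in $W^{1,p}_\loc(\Omega') \cap L^\infty_\loc(\Omega')$ since $g \in \LW$ and $g$ is locally bi-Lipschitz — it shows $x \mapsto (\del_l g_i)(f(x))$ belongs to $W^{1,p}_\loc(\Omega) \cap L^\infty_\loc(\Omega)$ with weak derivative $\sum_m (\del^2_{ml} g_i)(f(x))\,\del_k f_m(x)$. Since also $\del_j f_l \in W^{1,p}_\loc(\Omega) \cap L^\infty_\loc(\Omega)$ (using $f \in \LW$ and $f$ locally Lipschitz), the product rule \cite[Thm 4.2.2.4]{EvansGariepy} applies to each product $(\del_l g_i \circ f)\,\del_j f_l$, and differentiating the first line of \eqref{eq_changevari} term by term and summing over $l$ yields the second line.

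Finally I check integrability, i.e.\ $D^2 h \in L^p_\loc(\Omega)$, directly from \eqref{eq_changevari}: the summand $(\del_l g_i)(f(x))\,\del^2_{kj} f_l(x)$ is the product of a function bounded locally by $L(g)$ with a function in $L^p_\loc(\Omega)$ (because $f \in \LW$), hence lies in $L^p_\loc$; the summand $(\del^2_{ml} g_i)(f(x))\,\del_k f_m(x)\,\del_j f_l(x)$ is the product of functions bounded locally by $L(f)$ with $x \mapsto (\del^2_{ml} g_i)(f(x))$, and the latter lies in $L^p_\loc(\Omega)$ by the change-of-variables formula applied to $\del^2_{ml} g_i \in L^p_\loc(\Omega')$ together with the two-sided Jacobian bound for $f$. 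Thus $h \in W^{2,p}_\loc(\Omega;\Omega'')$, and by the symmetry noted above $h^{-1} \in W^{2,p}_\loc(\Omega'';\Omega)$, so $h$ is of class $\LW$. The only genuinely nontrivial ingredient is the chain rule for a Sobolev function precomposed with a bi-Lipschitz homeomorphism; the point requiring care is that compositions such as $\del_m u \circ f$ are well defined almost everywhere and remain in $L^p_\loc$, which is precisely where Lusin's property (N) for $f$ and $f^{-1}$ and the two-sided Jacobian bounds are used.
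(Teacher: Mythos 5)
Your proposal is correct and follows exactly the route the paper indicates: the paper gives no detailed proof, asserting only that the lemma ``follows directly from the product rule and the change of variables formula,'' and your argument is a faithful elaboration of that plan (local bi-Lipschitz stability of compositions, the Sobolev chain rule for precomposition with a bi-Lipschitz map via smooth approximation and property (N)/Jacobian bounds, then the product rule applied to $(\del_l g_i\circ f)\,\del_j f_l$, with the symmetry $h^{-1}=f^{-1}\circ g^{-1}$ handling the inverse). No gaps; the integrability check at the end is the right place to use the two-sided Jacobian bound.
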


\begin{rmk}
Linear maps (homeomorphisms) such as dilation and translation, are clearly of class $\LW$.  So if $g: \Omega \to \Omega'$ is any homeomorphism of class $\LW$, then by Lemma \ref{lemma_changevari}, its composition with such linear maps is also of class $\LW$.  In what follows, we will implicitly use this fact to obtain convenient geometrical configurations.
\end{rmk}

\section{Extensions for Homeomorphisms of Class $\LW$ between Doubly-Punctured Domains}

First we formulate the extension theorem in a different geometric configuration.

\begin{thm} 
\label{thm_disjointcase}
Let $p \geq 1$, let $E_1$ and $E_2$ be Jordan domains so that $\overline{E_1} \cap \overline{E_2} = \emptyset$, and let $B_1$ and $B_2$ be balls so that $\overline{B_1} \cap \overline{B_2} = \emptyset$.

If $g : (E_2 \cup E_1)^c \to (B_1 \cup B_2)^c$ is a homeomorphism of class $\LW$ so that $g(\del E_i) = \del B_i$ holds, for $i=1,2$, then there exists a homeomorphism $G : E_2^c \to B_2^c$ of class $\LW$ and a neighborhood $N$ of $\del E_2$ so that $g|(N \cap E_2^c) = G|(N \cap E_2^c)$.  

\end{thm}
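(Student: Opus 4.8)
The statement of Theorem~\ref{thm_disjointcase} differs from the main theorem only in the geometric configuration: instead of a collar $\bar D_2\setminus D_1$ sitting inside $D_2$, we are given a doubly-punctured domain $(E_1\cup E_2)^c$, and we wish to "fill in" the hole $E_1$ (resp.\ $B_1$), producing an extension across $\partial E_1$ defined on all of $E_2^c$. The idea, following Gehring's scheme, is to build the extension $G$ not by extending $g$ itself across $\partial E_1$ — which is impossible in general, since $g$ need not behave well there — but by \emph{replacing} $g$ on a neighborhood of $\overline{E_1}$ with a brand-new map that is forced to be nice, while keeping $g$ unchanged near $\partial E_2$ (this is where the neighborhood $N$ enters).

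\textbf{Step 1: Reduce to standard position.} Using the Remark after Lemma~\ref{lemma_changevari}, postcompose $g$ with a dilation/translation so that $\bar B_1\subset \B^n$ and $\B^n$ is disjoint from $\bar B_2$; likewise arrange $E_1$ conveniently. Fix concentric spheres, or rather two ``buffer'' Jordan surfaces, one close to $\partial E_1$ and one slightly farther out, whose $g$-images are round spheres bounding round annuli around $B_1$; this is possible because $g$ is a homeomorphism carrying $\partial E_1$ to $\partial B_1$, and one may pull back round spheres. Call $A$ the collar between these two surfaces (in the domain) and $A'=g(A)$ its image, which we will take to be a genuine round annulus $\{r_1<|y|<r_2\}$ after a further normalization.

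\textbf{Step 2: Replace $g$ on the inner region by a radial-type map.} On $A'$, a round annulus, the restriction $g|_{\partial(\text{inner sphere})}$ is a bi-Lipschitz, indeed class-$\LW$, homeomorphism of a sphere. The key construction is to define a new homeomorphism $h$ from the solid region bounded by the outer surface of $A$ onto the solid region bounded by the outer sphere of $A'$, which \emph{agrees with $g$ on $A$} (hence on the outer surface and beyond) but is radial — or a bi-Lipschitz interpolation between a radial map near the center and $g$ on $A$ — on the part interior to $A$. The radial (or conical) extension $x\mapsto |x|\,\varphi(x/|x|)$ of a boundary diffeomorphism is locally bi-Lipschitz and, by a direct computation with Equation~\eqref{eq_changevari}, of class $\LW$ on the punctured region; the point is to interpolate, via a bi-Lipschitz ``push'' supported in $A$, between this radial model on an inner sphere and $g$ on an outer sphere, so that the glued map is a homeomorphism onto the intended solid region. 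One then \emph{fills in the center}: a radial map of the punctured ball $\bar\B^n\setminus\{0\}$ extends to $0$ continuously, and one checks it remains of class $\LW$ across the origin using $p<n$ — this is precisely where the hypothesis $p\in[1,n)$ is used, since the second derivatives of $x\mapsto|x|\,\varphi(x/|x|)$ blow up like $|x|^{-1}$ near $0$ and $|x|^{-p}$ is locally integrable in $\R^n$ exactly when $p<n$.

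\textbf{Step 3: Glue and conclude.} Apply the gluing lemmas: Lemma~\ref{lemma_gluelip} gives that the map agreeing with $g$ on $(E_1\cup E_2)^c$ outside $A$ and with the new map $h$ on the region inside (and including) $A$ is a locally bi-Lipschitz embedding, since the two maps agree on the overlap $A$ and their images stay separated off the overlap; Lemma~\ref{lemma_gluing}, applied componentwise to first and second derivatives, upgrades this to class $\LW$. The resulting map $G:E_2^c\to B_2^c$ is a homeomorphism (it is a continuous bijection between the two sets, proper, hence a homeomorphism), of class $\LW$, and by construction $G=g$ on a neighborhood $N$ of $\partial E_2$ — namely the component of $(E_1\cup E_2)^c\setminus \bar A$ touching $\partial E_2$, which is untouched by the surgery.

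\textbf{The main obstacle} I anticipate is Step~2: constructing the interpolating homeomorphism $h$ on the collar $A$ that simultaneously (i) matches $g$ on the outer boundary of $A$ with all derivatives, so the glued map is genuinely $W^{2,p}_{\loc}$ and not merely continuous; (ii) is radial (or close enough to radial) on the inner sphere of $A$ so that the fill-in across the center is controlled; and (iii) remains a homeomorphism onto the correct region. This is exactly the technical heart of Gehring's argument, and the extra burden here, beyond the bi-Lipschitz case of Tukia--Väisälä, is tracking the second-order weak derivatives through the interpolation and through the gluing — the product-rule and chain-rule identities of Lemma~\ref{lemma_changevari} make this bookkeeping possible, but one must verify the $L^p_{\loc}$ bounds survive, with the singular contribution at the center handled by $p<n$.
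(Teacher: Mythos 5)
Your Step 2 is not a proof step; it is a restatement of the problem. The construction you defer to --- a homeomorphism $h$ of the collar $A$ that agrees with $g$ (together with its first and second weak derivatives, in the sense needed for Lemma \ref{lemma_gluing}) on the outer boundary region, is a radial cone on the inner side, and has second derivatives in $L^p$ throughout --- is itself a Schoenflies-type extension problem of exactly the difficulty of the theorem. You call it ``the technical heart of Gehring's argument,'' but Gehring's argument (which the paper follows) contains no such interpolation and no cone point. Instead it first reduces, by conjugating with a radial stretch $R$ supported in $\B$ (so that $h=g\circ R\circ g^{-1}$ equals the identity off $\B$), to the case where $g=\id$ outside a ball containing $\bar E_1\cup\bar E_2$; in that case the extension is produced \emph{globally} as the composition $G_*=\tau_1\circ g_*^{-1}\circ S\circ g_*$ of a periodized copy $g_*$ of $g$, a shear $S$, and a translation. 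Every factor is of class $\LW$ by inspection, so $G_*$ is of class $\LW$ by Lemma \ref{lemma_changevari}, and no derivative-matching interpolation between two prescribed boundary behaviors is ever performed. This is precisely the idea your proposal is missing.

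Two further concrete problems with the route as you describe it. First, the cone $x\mapsto|x|\,\varphi(x/|x|)$ maps the cone over a surface onto the cone over its image; for this to fill in $E_1$ you need the relevant region to be star-shaped about the cone point, which a general Jordan domain is not, and the pulled-back ``buffer'' surfaces $g^{-1}(\partial B(0,r_i))$ are likewise not round, so at least one side of your cone construction has no radial structure to exploit. Second, your use of $p<n$ to integrate the $|x|^{-1}$ singularity at the cone point is a signal that the strategy is off target: Theorem \ref{thm_disjointcase} is stated and proved in the paper for all $p\ge 1$ (the restriction $p<n$ enters only in Section 4, through the generalized inversions $I_{a,r}$), whereas a cone-point argument could at best prove the weaker range. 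The gluing in your Step 3 and the identification of the untouched neighborhood $N$ of $\partial E_2$ are fine in spirit, but they rest entirely on the unconstructed map $h$.
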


Following the outline of \cite[Sect 3]{Gehring}, we begin with a special case.

\begin{lemma} \label{lemma_identity}
Theorem \ref{thm_disjointcase} holds under the additional assumption that 
\begin{equation} \label{eq_identityball}
g|B^c \;=\; \id|B^c
\end{equation}
where $B$ is an open ball that contains $\bar{E_1}$ and $\bar{E_2}$.
\end{lemma}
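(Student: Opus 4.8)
The plan is to extend $g$ across the inner puncture $\overline{E_1}$, altering $g$ only inside a small neighbourhood of $\overline{E_1}$ while keeping $G=g$ near $\partial E_2$ (and $G=\id$ near $\partial B$). First I would normalise: composing with a linear homeomorphism --- harmless by the Remark after Lemma \ref{lemma_changevari} --- I may assume $B_1 = \B^n = B(0,1)$. Since $g|B^c=\id$ and $g$ is a homeomorphism, $g(B)=B$, so $\overline{B_1}\cup\overline{B_2}\subset B$. Fix $\e>0$ so small that the closed shell $\overline{S'}:=\{\,1\le|x|\le 1+\e\,\}$ lies in $B$ and misses $\overline{B_2}$, and put $\overline{S}:=g^{-1}(\overline{S'})$; shrinking $\e$ I also arrange that $\overline{S}$ misses $\overline{E_2}$ and a fixed neighbourhood $W$ of $\partial E_2$. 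Then $g$ restricts to a homeomorphism of the compact ``outer collar'' $\overline{S}$ of $\partial E_1$ onto $\overline{S'}$; on the interior of $\overline{S}$, which avoids $\partial E_1$ and $\partial B_1$ and so lies in the interiors of the relevant domains, both $g$ and $g^{-1}$ are of class $\LW$ and, by Lemma \ref{lemma_loclip}, bi-Lipschitz on compact sub-collars. One also needs, and extracts from the hypotheses, that $g$ is bi-Lipschitz and of class $W^{2,p}$ up to $\partial E_1$, so that $\partial E_1$ is a bi-Lipschitz sphere and $g|\partial E_1$ carries second-order Sobolev regularity --- this is where the full strength of the assumptions on $g$ enters.

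The core of the proof is a capping claim: there is a homeomorphism $\lambda:\overline{B(0,1+\e)}\to \overline{E_1}\cup S$ of class $\LW$ that coincides with $g^{-1}$ on the outer shell $\{\,1+\tfrac{\e}{3}\le|x|\le 1+\e\,\}$. Granting this, define $G$ to equal $g$ on the complement in $E_2^c$ of a small neighbourhood of $\overline{E_1}$ and to equal $\lambda^{-1}$ on that neighbourhood, matching the two prescriptions on an overlapping collar region where $\lambda^{-1}=g$. Then $G:E_2^c\to B_2^c$ is a well-defined homeomorphism with $G(\overline{E_1})=\overline{B_1}$ and $G=g$ on $W\cap E_2^c$, and $G$ is of class $\LW$ by the gluing Lemmas \ref{lemma_gluelip} and \ref{lemma_gluing} (the latter applied also to the first derivatives, which lie in $W^{1,p}_\loc$), since $g$ and, by the claim, $\lambda^{-1}$ are of class $\LW$ and since the overlap --- on which $G$ is literally equal to $g$ --- prevents any singular hypersurface from appearing.

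For the capping claim I would extend the collar product structure across $\overline{E_1}$ by a radial (Alexander-type) extension. Fix a bi-Lipschitz homeomorphism $\mu_0:\overline{B(0,1)}\to\overline{E_1}$, available because $\partial E_1$ is a bi-Lipschitz sphere, and set $\psi:=\mu_0^{-1}\circ(g|\partial E_1)^{-1}:\S^{n-1}\to\S^{n-1}$, which is bi-Lipschitz and --- after mollification, using the Sobolev regularity of $g$ up to $\partial E_1$ --- of class $W^{2,p}(\S^{n-1})$. Let $\overline{\psi}(x):=|x|\,\psi(x/|x|)$ be its radial extension, a bi-Lipschitz homeomorphism of $\overline{B(0,1)}$, and define $\lambda:=\mu_0\circ\overline{\psi}$ on $\overline{B(0,1)}$, $\lambda:=g^{-1}$ on $\{\,1+\tfrac{\e}{3}\le|x|\le 1+\e\,\}$, interpolating between them on the intermediate shell inside the region where $g^{-1}$ is already bi-Lipschitz and of class $W^{2,p}_\loc$, so that no kink across a hypersurface arises. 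The decisive point is that $\overline{\psi}$ is positively homogeneous of degree one, so that $|D^2\overline{\psi}(x)|\lesssim|x|^{-1}\bigl(|\psi|+|D_{\S^{n-1}}\psi|+|D^2_{\S^{n-1}}\psi|\bigr)(x/|x|)$, whence
\[
\int_{B(0,1)}|D^2\overline{\psi}|^p\;\lesssim\;\Bigl(\int_0^1 r^{\,n-1-p}\,dr\Bigr)\,\|\psi\|_{W^{2,p}(\S^{n-1})}^p\;<\;\infty,
\]
and this holds exactly because $p<n$. By the change-of-variables formula \eqref{eq_changevari} and the bi-Lipschitz bounds for $\mu_0$ this passes to $\lambda$, and the same computation applied to $\lambda^{-1}$ (built from $\mu_0^{-1}$ and the radial extension of $\psi^{-1}$) gives $\lambda^{-1}\in W^{2,p}_\loc$ as well.

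The main obstacle is exactly this capping step: one must meet simultaneously three demands on the extension near $\partial E_1$ --- that it be a homeomorphism, that it be locally bi-Lipschitz up to and including $\partial E_1$ (which forces one to exploit the collar structure of $g$ together with the bi-Lipschitz geometry of $\partial E_1$ and of the chart $\mu_0$), and that it be of class $W^{2,p}_\loc$ there. The last demand is what pins the argument to the range $p<n$: the conical, degree-one-homogeneous nature of the radial extension contributes second derivatives of order $\dist(\cdot,\partial E_1)^{-1}$, and these are $p$-integrable precisely when $p<n$. This is the very mechanism behind the sharpness statements of the introduction --- for $p\ge n$ the homogeneous extension is no longer of class $\LW$, in keeping with the exotic $7$-sphere obstruction, and the borderline case $p=n$ remains open.
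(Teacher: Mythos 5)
Your reduction to the ``capping claim'' hides the entire content of the lemma inside an unproved step, and the ingredients you invoke for it are not available. First, the bi-Lipschitz parametrization $\mu_0 : \overline{B(0,1)} \to \overline{E_1}$ does not come for free from $\partial E_1$ being a bi-Lipschitz sphere: extending a bi-Lipschitz boundary parametrization to a bi-Lipschitz homeomorphism of the closed ball is precisely the Sullivan/Tukia--V\"ais\"al\"a Schoenflies theorem cited in the introduction, i.e.\ you are assuming a theorem of the same depth as the one being proved, and even then $\mu_0$ carries no second-order Sobolev control whatsoever. Second, your statement that one ``extracts from the hypotheses'' that $g$ is bi-Lipschitz and of class $W^{2,p}$ up to $\partial E_1$ is unfounded: the class $\LW$ is defined by regularity on the \emph{interior} of the closed collar, so nothing is assumed about traces or uniform Lipschitz bounds at $\partial E_1$, and Lemma \ref{lemma_loclip} only applies to compact subsets of the open set. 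Mollifying $\psi$ to repair this destroys the exact matching with $g^{-1}$ on the shell, and the ``interpolation on the intermediate shell'' that is supposed to absorb the discrepancy while staying a locally bi-Lipschitz $W^{2,p}$ homeomorphism is itself an annulus-type extension problem in this category --- again essentially the statement under proof, not a routine construction.

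There is also a structural mismatch: Theorem \ref{thm_disjointcase}, and hence the lemma, is asserted for all $p \geq 1$, whereas your mechanism (the degree-one homogeneous radial extension, whose second derivatives blow up like the distance to the cone point) only works for $p < n$; the restriction $p < n$ in the paper enters only later, through the generalized inversions of Section \ref{sect_collars}, not here. The paper's own proof avoids any extension across $\partial E_1$ and any cone singularity: it periodizes $g$ to $g_*$ by translations $\tau_k$, composes with the shear map $S$ of Equation \eqref{eq_shearmap} to form $G_* = \tau_1 \circ g_*^{-1} \circ S \circ g_*$, and fills the translated holes by $\tau_1$ and $\id$, so that all regularity follows from the composition Lemma \ref{lemma_changevari} and the gluing Lemmas \ref{lemma_gluelip} and \ref{lemma_gluing}, uniformly in $p$. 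As written, your argument has a genuine gap at its central step and, even if completed, would prove a strictly weaker statement.
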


\begin{proof}
\emph{Step 1}.\
By composing with linear maps, we may assume that $B = \B$, and that there exist $a,b \in \R$ so that $a < b$ and $\bar{B}_1 \subset \{x_n < a\}$ and $\bar{B}_2 \subset \{x_n > b\}$.

Put $c = (b-a)/2$.  Define an odd, $C^{1,1}$-smooth function $s_0 : \R \to [-1,1]$ by
$$
s_0(t) \;:=\;
\left\{\begin{array}{ll}
1- (t- c)^2/c^2, & \textrm{if } 0 \leq t \leq c \\
1, & \textrm{if } t > c
\end{array}\right.
$$
and using the auxiliary function $s : \R \to [0,3]$, given by
$$
s(t) \;:=\; \frac{3}{2}\Big( s_0\Big(t - \frac{a+b}{2}\Big) + 1 \Big)
$$
we define a bi-Lipschitz homeomorphism $S : \R^n \to \R^n$ by
\begin{equation} \label{eq_shearmap}
S(x) \;=\; x - s(x_n) \, e_1.
\end{equation}
It is clear that $S$ is 
of class $LW^p$ and satisfies the a.e.\ estimate
\begin{equation} 
\label{eq_derivSbd}
|D^2S| \;\leq\;
2c^{-2}.
\end{equation}

\begin{figure}[h]
\centering
\begin{pspicture*}(-6,-1.25)(6,1.25)
\psarc{<-}(0,0){.75}{30}{150}
\put(0,.25){$S$}
\multiput(0.25,0)(.5,0){4}{ \psline[linewidth=2pt]{<->}(-3.5,-1.25)(-3.5,1.25) }
\multiput(0.25,0)(.5,0){4}{ \psbezier[linewidth=2pt](2,.35)(2,0)(3.5,0)(3.5,-.35) }
\multiput(0.25,0)(.5,0){4}{ \psline[linewidth=2pt]{<-}(2,1.25)(2,.35) }
\multiput(0.25,0)(.5,0){4}{ \psline[linewidth=2pt]{->}(3.5,-.35)(3.5,-1.25) }
\multiput(0,0)(7,0){2}{ \psline{->}(-5,-1.25)(-5,1.25) }
\multiput(0,0)(7,0){2}{ \psline{->}(-5.5,-.75)(-1.5,-.75) }
\multiput(0,0)(7,0){2}{ \put(-1.4,-.8){$x_1$} }
\multiput(0,0)(7,0){2}{ \put(-5.5,1.1){$x_n$} }
\multiput(0,0)(7,0){2}{ \psline[linestyle=dashed](-5.5,-.35)(-1.5,-.35) }
\multiput(0,0)(7,0){2}{ \psline[linestyle=dashed](-5.5,.35)(-1.5,.35) }
\multiput(0,0)(7,0){2}{ \put(-5.25,-.3){$a$} }
\multiput(0,0)(7,0){2}{ \put(-5.25,.4){$b$} }
\end{pspicture*}
\caption{For $\R^2$, level curves for the map $S$.}
\end{figure}
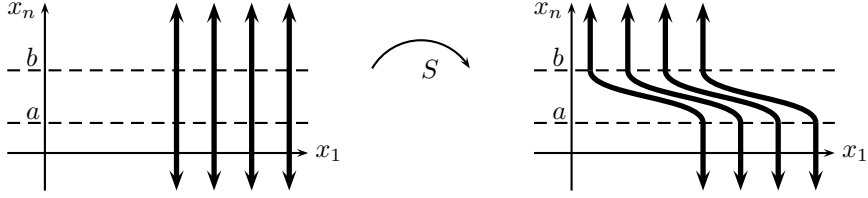

\emph{Step 2}.\
For $k \in \mathbb{Z}$, put $\tau_k(x) = x + 3k e_1$ and consider the sets
$$
\Omega \;:=\; \Big( \bigcup_{k=0}^\infty \tau_k(E_1) \cup \tau_k(E_2) \Big)^c
\textrm{ and }
\Omega' \;:=\; \Big( \bigcup_{k=0}^\infty \tau_k(B_1) \cup \tau_k(B_2) \Big)^c. 
$$
We now modify $g$ into a new homeomorphism $g_* : \Omega \to \Omega'$, as follows:
\begin{equation} \label{eq_periodicmap}
g_*(x) \;:=\;
\left\{\begin{array}{ll}
(\tau_k \circ g \circ \tau_{-k})(x), &
\textrm{if } x \in \Omega \cap \tau_k(\B), 
\textrm{ for some } k \geq 0 \\
x, & 
\textrm{if } x \in \Omega \setminus \bigcup_{k=0}^\infty \tau_k(\B).
\end{array}\right.
\end{equation}
By our hypotheses, there exists $r \in (0,1)$ so that $E_1 \cup E_2 \subset B(0,r)$ and so that $g|\B \setminus B(0,r) = \id$.  Putting $\Omega_1 := \tau_k(\B) \cap \Omega$ and $\Omega_2 := \Omega \setminus \bigcup_{l=0}^\infty \tau_k(\overline{B(0,r)})$ for each $k \in \N$, Lemma \ref{lemma_gluelip} implies that $g_*$ is locally bi-Lipschitz.

Similarly, for any bounded domain $O$ in $\Omega$ that meets $\tau_k(\partial \B)$, put $O_1 := O \cap \Omega$ and $O_2 := O \setminus \tau_k(\overline{B(0,r)})$.  For $f_1 := D(\tau_k \circ g \circ \tau_{-k})$ and $f_2 := D(\id)$, Lemma \ref{lemma_gluing} implies that $g_* \in W^{2,p}(O)$  and therefore $g_* \in W^{2,p}_{loc}(\Omega;\Omega')$.  By symmetry, the same is true of $g_*^{-1}$, so $g_*$ is of class $\LW$.

\emph{Step 3}.\ Consider the bi-Lipschitz homeomorphism given by
\begin{equation} \label{eq_periodicext}
G_* \;:=\; \tau_1 \circ g_*^{-1} \circ S \circ g_*.
\end{equation}

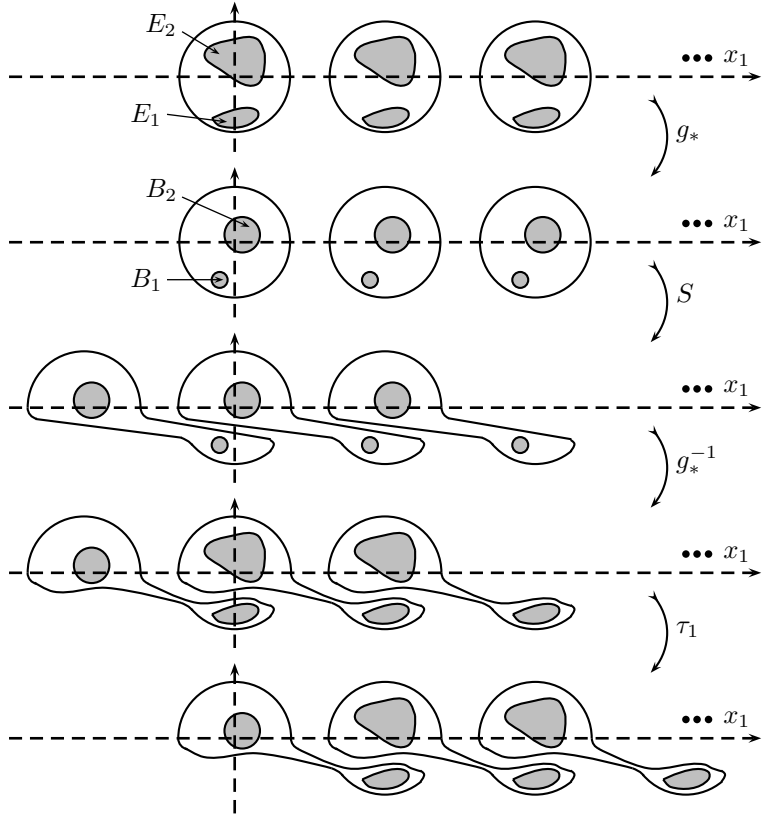
\begin{figure}[h] \label{fig_gehring}
\centering
\begin{pspicture*}(-3,-9.8)(7,1)
\multiput(0,.3)(2,0){3}{
\pscurve[fillcolor=lightgray,fillstyle=solid,framearc=.2](.4,0)(.3,.2)(0,.2)(-.4,0)(-.2,-.2)(.3,-.4)(.4,0) }
\multiput(-.1,-.15)(2,0){3}{
\pscurve[fillcolor=lightgray,fillstyle=solid,framearc=.2](-.2,-.4)(.4,-.3)(.4,-.4)(-.1,-.5)(-.2,-.4) }
%
\multiput(0,-2.5)(2,0){3}{ \pscircle[fillcolor=lightgray,fillstyle=solid,framearc=.2](.1,.4){.25} }
\multiput(0,-2.3)(2,0){3}{ \pscircle[fillcolor=lightgray,fillstyle=solid,framearc=.2](-.2,-.4){.12} }
%
\multiput(-2,-4.7)(2,0){3}{ \pscircle[fillcolor=lightgray,fillstyle=solid,framearc=.2](.1,.4){.25} }
\multiput(0,-4.5)(2,0){3}{ \pscircle[fillcolor=lightgray,fillstyle=solid,framearc=.2](-.2,-.4){.12} }
%
\multiput(0,0)(2,-2.2){2}{ \put(-2,-6.9){ \pscircle[fillcolor=lightgray,fillstyle=solid,framearc=.2](.1,.4){.25} } }
\multiput(0,0)(2,-2.2){2}{ \multiput(0,-6.3)(2,0){2}{
\pscurve[fillcolor=lightgray,fillstyle=solid,framearc=.2](.4,0)(.3,.2)(0,.2)(-.4,0)(-.2,-.2)(.3,-.4)(.4,0) } }
\multiput(.1,-.05)(2,-2.2){2}{ \multiput(-.2,-6.7)(2,0){3}{
\pscurve[fillcolor=lightgray,fillstyle=solid,framearc=.2](-.2,-.4)(.4,-.3)(.4,-.4)(-.1,-.5)(-.2,-.4) } }
%
\multiput(0,0)(0,-2.2){5}{ \psline[linewidth=1pt,linestyle=dashed]{->}(-3,0)(7,0) }
\multiput(0,0)(0,-2.2){5}{ \psline[linewidth=1pt,linestyle=dashed]{->}(0,-1)(0,1) }
\multiput(0,0)(0,-2.2){5}{ \put(6.5,.2){$x_1$} }
\multiput(0,0)(0,-2.2){5}{ \multiput(6,.25)(.15,0){3}{ \pscircle*(0,0){.05} } }
\multiput(0,0)(0,-2.2){2}{ \multiput(0,0)(2,0){3}{ \pscircle(0,0){.75} } }
\multiput(0,-4.4)(0,-2.2){2}{ \multiput(0,0)(2,0){3}{ \psarc(0,0){.75}{-140}{-50} } }
\multiput(-2,-4.4)(0,-2.2){2}{ \multiput(0,0)(2,0){3}{ \psarc(0,0){.75}{0}{180} } }
\multiput(-2.6,-4.4)(0,-2.2){2}{ \multiput(0,0)(2,0){3}{ \psarc(0,0){.15}{180}{260} } }
\multiput(-1.1,-4.4)(0,-2.2){2}{ \multiput(0,0)(2,0){3}{ \psarc(0,0){.15}{180}{250} } }
\multiput(-.56,-4.9)(0,-2.2){2}{ \multiput(0,0)(2,0){3}{ \pscurve(0,0)(-.05,.05)(-.15,.1) } }
\multiput(0,-4.4)(2,0){3}{ \psline(-.7,-.4)(-2.65,-.15) }
\multiput(.47,-4.98)(0,-2.2){2}{ \multiput(0,0)(2,0){3}{ \pscurve(0,0)(.05,.1)(0,.15) } }
\multiput(0,-4.4)(2,0){3}{ \psline(.48,-.42)(-1.16,-.14) }
\multiput(0,0)(2,-2.2){2}{ \multiput(-.56,-7.1)(2,0){3}{ \pscurve(-.15,.1)(-1.15,.3)(-1.85,.25)(-2.10,.37) } }
\multiput(0,0)(2,-2.2){2}{ \multiput(0,-6.6)(2,0){3}{ \pscurve(.48,-.44)(.35,-.4)(.25,-.35)(-.5,-.4)(-1.16,-.14) } }
\multiput(0,-8.8)(2,0){3}{ \psarc(0,0){.75}{0}{180} }
\multiput(2,-8.8)(2,0){3}{ \psarc(0,0){.75}{-140}{-50} }
\multiput(-.6,-8.8)(2,0){3}{ \psarc(0,0){.15}{180}{260} }
\multiput(.9,-8.8)(2,0){3}{ \psarc(0,0){.15}{180}{250} }
\multiput(1.44,-9.3)(2,0){3}{ \pscurve(0,0)(-.05,.05)(-.15,.1) }
\multiput(2.47,-9.38)(2,0){3}{ \pscurve(0,0)(.05,.1)(0,.15) }
%
\multiput(5,-.8)(0,-2.2){4}{ \psarc{<-<}(0,0){.75}{-45}{45} }
\put(5.75,-.8){ $g_*$ }
\put(5.75,-3){ $S$ }
\put(5.75,-5.2){ $g_*^{-1}$ }
\put(5.75,-7.4){ $\tau_1$ }
\psline[linewidth=.5pt]{->}(-.7,.6)(-.2,.3)
\put(-1.3,.6){ $E_2$ }
\psline[linewidth=.5pt]{->}(-.9,-.5)(-.1,-.6)
\put(-1.5,-.6){ $E_1$ }
\psline[linewidth=.5pt]{->}(-.7,-1.6)(.2,-2)
\put(-1.3,-1.6){ $B_2$ }
\psline[linewidth=.5pt]{->}(-.9,-2.7)(-.15,-2.7)
\put(-1.5,-2.8){ $B_1$ }
\end{pspicture*}
\caption{A schematic of the mapping $G_*$.}
\end{figure}

By Lemma \ref{lemma_changevari}, it is also of class $\LW$.  We now define $G : E_2^c \to B_2^c$ as
\begin{equation} \label{eq_periodicext2}
G(x) \;:=\;
\left\{\begin{array}{ll}
G_*(x), & 
\textrm{if } x \in \Omega \\
\tau_1(x), & 
\textrm{if } x \in \bigcup_{k=0}^\infty \tau_k(E_1) \\
x, & 
\textrm{if } x \in \bigcup_{k=1}^\infty \tau_k(E_2).
\end{array}\right.
\end{equation}
By the same argument as \cite[pp.\ 153-4]{Gehring}, the map $G$ is a homeomorphism.  
We also note that $G$ is ``periodic'' in the sense that, for eack $k \in \N$,
\begin{equation} \label{eq_periodic}
\big(\tau_k \circ G \circ \tau_{-k}\big)|\tau_k(\bar\B \setminus E_2) \;=\; G|\tau_k(\bar\B \setminus E_2).
\end{equation}
To see that $G$ extends $g$, consider the set $\sigma_{ab} := g^{-1}_*\big(\{a \leq x_n \leq b\}\big)$.  Its complement $\R^n \setminus \sigma_{ab}$ consists of two (connected) components.  Let $\sigma_b$ be the component containing the vector $e_n$, let $\sigma_a$ be the component containing $-e_n$, and consider the open set $N := \B \cap \sigma_b$.  By assumption, $\bar{B}_2$ lies in $\B \cap \{x_n > b\}$, so $\bar{E}_2$ lies in $N$.  From before, we have $g_* = g$ on $\B$ and $S = \tau_{-1}$ on $\{x_n > b\}$, which imply that 
$$
(S \circ g_*)(N) \;=\; 
(\tau_{-1} \circ g)(N) \;=\;
\tau_{-1}(\B \cap \{x_n > b\}) \;\subset\; \tau_{-1}(\B).
$$
By hypothesis we have $g_*^{-1} = \id$ on $\tau_{-1}(\B)$ and hence on $(S \circ g)(N)$.  It follows that
\begin{eqnarray*}
G|N \;=\; 
G_*|N \;=\;
(\tau_1 \circ g_*^{-1} \circ S \circ g_*)|N \;=\;
(\tau_1 \circ \id \circ \tau_{-1} \circ g)|N \;=\; g|N.
\end{eqnarray*}
As a result, $G$ agrees with $g$ on $N \cap E_2^c$.

Lastly, $G = \id$ holds on $\sigma_b \setminus E_2$ and $G = \tau_1$ holds on $\sigma_a$.  Using these domains for $\Omega_1$ and $\R^n \setminus \bigcup_{k=0}^\infty \tau_k(\bar\B)$ for $\Omega_2$, Lemma \ref{lemma_gluelip} implies that $G$ is locally bi-Lipschitz.  With the same choice of domains, Lemma \ref{lemma_gluing} further implies that $G \in W^{2,p}_{loc}(E_2^c;B_2^c)$.
For the case of $G^{-1}$, note that the inverse is given by 
\begin{equation} \label{eq_inversemap}
G^{-1}(x) \;=\;
\left\{\begin{array}{ll}
G_*^{-1}(x), & 
\textrm{if } x \in \Omega \setminus \tau_{-1}(B_2) \\
\tau_{-1}(x), & 
\textrm{if } x \in \bigcup_{k=0}^\infty \tau_k(E_1) \\
x, & 
\textrm{if } x \in \bigcup_{k=1}^\infty \tau_k(E_2).
\end{array}\right.
\end{equation}
Arguing similarly with $g_*(N)$ for $N$, it follows that $G^{-1} \in W^{2,p}_{loc}(B_2^c;E_2^c)$, which proves the lemma.
\end{proof}

We now observe that Lemma \ref{lemma_identity} holds true even when $B_1$ and $B_2$ are not balls.  In the preceding proof it is enough that, up to rotation, there is a slab $\{c_1 < x_n < c_2\}$ that separates $B_1$ from $B_2$.  This result, stated below, is used in Section \ref{sect_collars}.

\begin{lemma} \label{lemma_identitynonballs}
Let $p \geq 1$ and let $E_1$, $E_2$, $C_1$, and $C_2$ be Jordan domains so that $\overline{E_1} \cap \overline{E_2} = \emptyset$ and $\overline{C_1} \cap \overline{C_2} = \emptyset$.  If $g : (E_1 \cup E_2)^c \to (C_1 \cup C_2)^c$ is a homeomorphism of class $\LW$ so that 
\begin{enumerate}
\item $g(\del E_i) = \del B_i$ holds, for $i=1,2$,
\item there exists a ball $B$ containing $\bar{E_1}$ and $\bar{E_2}$ so that $g|B^c = \id|B^c$,
\item there exist a rotation $\Theta : \R^n \to \R^n$ and numbers $c_1, c_2 \in \R$, with $c_1 < c_2$, so that $\Theta(C_1) \subset \{x_n < c_1\}$ and $\Theta(C_2) \subset \{x_n > c_2\}$,
\end{enumerate} 
then there is a homeomorphism $G : E_2^c \to C_2^c$ of class $\LW$ and a neighborhood $N$ of $\del E_2$ so that $g|(N \cap E_2^c) = G|(N \cap E_2^c)$.
\end{lemma}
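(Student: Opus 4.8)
The plan is to conjugate $g$ by the rotation $\Theta$ so as to reduce to the geometric situation treated in Lemma \ref{lemma_identity}, and then to run the proof of that lemma essentially verbatim: the sole place where the round shape of $B_1$ and $B_2$ was used there was to produce a hyperplane, hence a slab $\{a < x_n < b\}$, separating $\overline{B_1}$ from $\overline{B_2}$, and hypothesis (3) hands us exactly such a slab for the general Jordan domains $C_1$, $C_2$.

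First I would pass to $\widetilde{g} := \Theta \circ g \circ \Theta^{-1}$, a homeomorphism of $(\Theta(E_1) \cup \Theta(E_2))^c$ onto $(\Theta(C_1) \cup \Theta(C_2))^c$. Since $\Theta$ is linear, Lemma \ref{lemma_changevari} (and the remark following it) shows that $\widetilde{g}$ is again of class $\LW$; moreover $\widetilde{g} = \id$ off the ball $\Theta(B) \supset \overline{\Theta(E_1)} \cup \overline{\Theta(E_2)}$, and by (3) we have $\Theta(C_1) \subset \{x_n < c_1\}$ and $\Theta(C_2) \subset \{x_n > c_2\}$ with $c_1 < c_2$. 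If a homeomorphism $\widetilde{G} : \Theta(E_2)^c \to \Theta(C_2)^c$ of class $\LW$ extending $\widetilde{g}$ on a neighborhood of $\partial\Theta(E_2)$ is produced, then $G := \Theta^{-1} \circ \widetilde{G} \circ \Theta$ is of class $\LW$ by Lemma \ref{lemma_changevari}, maps $E_2^c$ onto $C_2^c$, and coincides with $g$ on the $\Theta^{-1}$-image of that neighborhood, which is a neighborhood of $\partial E_2$. Thus, after renaming, we may assume from the outset that $C_1 \subset \{x_n < c_1\}$ and $C_2 \subset \{x_n > c_2\}$ with $c_1 < c_2$.

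It then remains to repeat Steps 1--3 of the proof of Lemma \ref{lemma_identity}. In Step 1 one composes with a translation and a dilation to make $B = \B$; these preserve horizontal slabs, and since $\overline{C_1}$ and $\overline{C_2}$ are compact one may choose $a < b$ with $\overline{C_1} \subset \{x_n < a\}$ and $\overline{C_2} \subset \{x_n > b\}$, so that the shear map $S$ of \eqref{eq_shearmap} and its bound \eqref{eq_derivSbd} are defined exactly as before; in particular $S = \tau_{-1}$ on $\{x_n > b\}$. Steps 2 and 3 --- the construction of the periodic map $g_*$, of $G_* = \tau_1 \circ g_*^{-1} \circ S \circ g_*$, and of $G$, together with the verification via Lemmas \ref{lemma_gluelip}, \ref{lemma_gluing}, and \ref{lemma_changevari} that $G$ and $G^{-1}$ are locally bi-Lipschitz and lie in $W^{2,p}_{loc}$, and the topological argument that $G$ is a homeomorphism --- use $C_1$ and $C_2$ only through the facts that they are Jordan domains with disjoint closures, that $g(\partial E_i) = \partial C_i$, that $g = \id$ off $\B$, and that $\overline{C_2} \subset \B \cap \{x_n > b\}$, none of which requires roundness. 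The step I expect to need the most care is precisely this last verification: one must inspect every appearance of $B_1$ or $B_2$ in the proof of Lemma \ref{lemma_identity}, and in the topological argument that $G$ is a homeomorphism, and confirm that the property invoked persists for $C_1$ and $C_2$ under the separating-slab hypothesis. I expect this to be routine --- the construction is topological, and the bi-Lipschitz and Sobolev bookkeeping sees only the boundary spheres $\partial E_i$, $\partial C_i$ and the gluing interfaces $\tau_k(\partial\B)$ --- but it is the heart of the matter, being exactly the observation that in Lemma \ref{lemma_identity} the word ``ball'' was used only as a convenient source of a separating slab.
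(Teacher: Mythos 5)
Your proposal is correct and follows exactly the paper's route: the paper disposes of this lemma with the single observation that the proof of Lemma \ref{lemma_identity} used the roundness of the target balls only to produce a separating slab $\{a < x_n < b\}$, which hypothesis (3) supplies (after the rotation) for the Jordan domains $C_1$, $C_2$. Your conjugation by $\Theta$ and the check that Steps 1--3 go through verbatim is precisely the intended argument, spelled out in more detail than the paper itself gives.
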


Though the regularity of the extension $G$ is local in nature, 
it nonetheless enjoys certain uniform properties.  We summarize them in the next lemma. 

\begin{lemma} \label{lemma_bilipoutsideball}
Let $E_1$, $E_2$, $C_1$, $C_2$, $B$, and $g$ be as in Lemma \ref{lemma_identitynonballs}.  If $G$ is the extension of $g$ as defined in Equation \eqref{eq_periodicext2}, then 
\begin{enumerate}
\item $DG \in L^\infty(E_2^c)$ and $DG^{-1} \in L^\infty(C_2^c)$;
\item the restriction $G|B^c$ is a bi-Lipschitz homeomorphism.
\end{enumerate}
\end{lemma}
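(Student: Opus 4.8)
The plan is to exploit the explicit periodic structure of the extension $G_* = \tau_1 \circ g_*^{-1} \circ S \circ g_*$ together with the fact that $g$ (hence $g_*$) is the identity outside $B$, and reduce every claim to behavior on a single bounded "period" via the periodicity relation \eqref{eq_periodic}. First I would normalize, as in the proof of Lemma \ref{lemma_identity}, so that $B = \B$ and the slab $\{a < x_n < b\}$ separates the relevant images; recall also $r \in (0,1)$ with $E_1 \cup E_2 \subset B(0,r)$ and $g = \id$ on $\bar\B \setminus B(0,r)$. For part (2), the point is that on $\B^c$ we have $g_* = \id$ on the relevant pieces, $S = \tau_{-1}$ on $\{x_n > b\}$ and $S = \id$ on $\{x_n < a\}$ (after the normalization $c = (b-a)/2$), so a direct case check shows $G = \id$ on $\sigma_b \setminus E_2 \supset \B^c \cap \{x_n > b\}$-type regions and $G = \tau_1$ on $\sigma_a$; more precisely, outside $\bigcup_k \tau_k(\bar\B)$ one reads off from \eqref{eq_periodicext2} that $G$ is a translation, and the only place $G|\B^c$ can fail to be globally (not just locally) bi-Lipschitz is across the finitely many "seams" $\tau_k(\partial\B)$, $k \le 0$, near $\partial B$. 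Since $\B^c$ meets only $\tau_0(\bar\B) = \bar\B$ among the $\tau_k(\bar\B)$ with $k \ge 0$, and $G|\B^c$ is built from $\id$, $\tau_1$, and $G_*$ glued along compact pieces of $\partial\B$, Lemma \ref{lemma_loclip} (applied on a bounded connected neighborhood of $\partial\B \cap \B^c$) upgrades local bi-Lipschitzness to bi-Lipschitzness there, and the two unbounded ends are isometries; gluing via Lemma \ref{lemma_gluelip} gives (2).

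For part (1), I would argue that $DG$ is bounded by examining $G$ on the three regions in \eqref{eq_periodicext2}. On $\bigcup_{k \ge 0}\tau_k(E_1)$ and $\bigcup_{k\ge 1}\tau_k(E_2)$, $G$ is a translation, so $DG = \mathrm{Id}$ there. On $\Omega$, $G = G_* = \tau_1 \circ g_*^{-1} \circ S \circ g_*$; by the chain rule \eqref{eq_changevari} (first line), $|DG_*| \le |Dg_*^{-1}(\cdot)|\,|DS(\cdot)|\,|Dg_*(\cdot)|$ a.e., and each of the three factors is in $L^\infty$: $S$ is bi-Lipschitz with an explicit constant depending on $c$, and $g_*$, $g_*^{-1}$ are locally bi-Lipschitz but equal to the identity outside the union of the $\tau_k(\bar\B)$; by periodicity it suffices to bound $Dg_*$ on $\Omega \cap \bar\B$, where $g_* = g$ is bi-Lipschitz by Lemma \ref{lemma_loclip} (with constant depending only on the data), and likewise for $g_*^{-1}$ on $g_*(\Omega \cap \bar\B)$. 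Hence $DG_* \in L^\infty(\Omega)$ with a bound uniform in the period index, and combining with the translation pieces gives $DG \in L^\infty(E_2^c)$. The identical argument applied to $G^{-1}$ via its explicit formula \eqref{eq_inversemap}, together with $DG^{-1}$ on $\Omega \setminus \tau_{-1}(B_2)$ being controlled by the chain rule applied to $g_*^{-1} \circ S^{-1} \circ g_* \circ \tau_{-1}$ and Lemma \ref{lemma_loclip}, yields $DG^{-1} \in L^\infty(C_2^c)$.

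The main obstacle I anticipate is making the uniformity across the infinitely many periods genuinely rigorous: one must verify that the bi-Lipschitz constant for $g$ on $\bar\B$ supplied by Lemma \ref{lemma_loclip} — which depends on $\B$, the compact set $\bar\B$, and $L(g)$ — together with the fixed constant $2c^{-2}$ from \eqref{eq_derivSbd} and the constant $L(S)$, controls $|DG_*|$ on \emph{every} translate $\tau_k(\bar\B)\cap\Omega$ with the \emph{same} constant, and then check that the gluing across seams $\tau_k(\partial\B)$ (where $G$ transitions from $G_*$ to a pure translation) does not spoil the global $L^\infty$ bound on $DG$ over all of $E_2^c$. The translation invariance $\tau_k \circ G_* \circ \tau_{-k} = G_*$ on the appropriate pieces, which is precisely \eqref{eq_periodic}, handles the first point cleanly, and the seam analysis is a finite-overlap matter handled by Lemma \ref{lemma_gluelip}; but one should state carefully that "$DG \in L^\infty$" does not assert global bi-Lipschitzness of $G$ on all of $E_2^c$ (indeed $G$ permutes the punctures, so it need not be), which is why (2) is restricted to $B^c$, where the puncture-shuffling behavior is absent.
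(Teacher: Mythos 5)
Your argument for item (1) is essentially the paper's: decompose the domain into the regions where $G$ is a translation or the identity and a fundamental piece of $\Omega$, use the periodicity \eqref{eq_periodic} to get constants independent of the period, and invoke Lemma \ref{lemma_loclip} on a compact fundamental piece; this is sound (the paper bounds $L(G|\cdot)$ on the pieces directly rather than factoring $|DG_*|\le|Dg_*^{-1}|\,|DS|\,|Dg_*|$, but that is a cosmetic difference). Item (2), however, has a genuine gap, and it begins with a false geometric claim: $\B^c$ is the complement of the single unit ball, so it contains \emph{every} translate $\tau_k(\bar\B)$, $k\ge 1$, in its entirety, and on $\Omega\cap\tau_k(\B)$ the map $G=G_*=\tau_1\circ g_*^{-1}\circ S\circ g_*$ is genuinely nonlinear; moreover on the unbounded component of $\sigma_{ab}\setminus\bigcup_k\tau_k(\B)$ one has $G=\tau_1\circ S$, a shear, not an isometry. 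So the reduction of (2) to ``finitely many seams near $\partial\B$, translations elsewhere'' is not available.

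More importantly, even with the correct decomposition and uniform constants on the pieces, your argument only produces \emph{local} bi-Lipschitzness: Lemma \ref{lemma_gluelip} concludes that the glued map is a locally bi-Lipschitz embedding, and Lemma \ref{lemma_loclip} applies only to compact subsets, so neither yields the global two-sided estimate on the unbounded, non-convex set $\B^c$ that item (2) asserts. The missing step --- which is the heart of the paper's proof --- is the global comparison for pairs $x_1,x_2\in\B^c$ whose joining segment crosses $\B$: one breaks the segment at points $y_1,y_2\in\partial\B$, uses the Lipschitz bound $C$ on segments avoiding $\B$ (coming from the derivative bound of item (1)) on the two outer subsegments and the bi-Lipschitzness of $G|\partial\B$ (from Lemma \ref{lemma_loclip}) on the middle, obtaining $L(G|\B^c)\le C+L(G|\partial\B)$; the quantitative lower bound is then obtained by running the same argument for $G^{-1}$, using $DG^{-1}\in L^\infty(C_2^c)$. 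Your proposal never addresses how points lying in different, far-apart ``periods'' of $\B^c$ are prevented from being mapped close together, i.e.\ the global lower bound, so statement (2) is not established; you would need to reproduce the detour-through-$\partial\B$ argument together with its symmetric version for $G^{-1}$ (or an explicit quasiconvexity argument for $\B^c$ and for the image, applied to both $G$ and $G^{-1}$), none of which follows from Lemmas \ref{lemma_loclip} and \ref{lemma_gluelip} alone.
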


\begin{proof}
From Lemma \ref{lemma_identitynonballs}, the map $G$ is already locally bi-Lipschitz.  To prove item (1), we will give a uniform bound for $L(G|K)$ over all compact subsets $K$ of $B^c$.
Let $B = \B$ and let $S$ and $g_*$ be as defined in the proof of Lemma \ref{lemma_identity}.

Again, let $\sigma_{ab} := g^{-1}_*\big(\{a \leq x_n \leq b\}\big)$ and let $\sigma_b$ and $\sigma_a$ be the (connected) components of $\R^n \setminus \sigma_{ab}$ containing the vectors $e_n$ and $-e_n$, respectively.
By Equation \eqref{eq_shearmap}, we have $S|\{x_n < a\} = \id$ and $S|\{x_n > b\} = \tau_{-1}$, which imply, respectively, the bounds $L(G|\B^c \cap \sigma_a) \leq 1$ and $L(G|\B^c \cap \sigma_b) \leq 1$.  

It remains to estimate $L(G|\B^c \cap \sigma_{ab})$.  For each $k \in \N$, the set $\sigma_{ab}^k := \sigma_{ab} \cap \tau_k(\bar\B)$ is compact, so by Lemma \ref{lemma_loclip}, the restriction $G|\sigma_{ab}^k$ is bi-Lipschitz.  Equation \eqref{eq_periodic} then implies that
$L(G|\sigma_{ab}^k) = L(G|\sigma_{ab}^1)$ holds for eack $k \in \N$.

The remaining set $\sigma_{ab} \setminus \bigcup_{k=0}^\infty \tau_k(\B)$ consists of infinitely many components, one of which is an unbounded subset $U$ of $\{x_1 < 0\}$ and the others are translates of a compact subset $K_0$ of $\sigma_{ab} \cap B(0,3)$.  
Since $g|U = \id$, it follows that
$$
G|\sigma \;=\; (\tau_1 \circ g_*^{-1} \circ S \circ g_*)|\sigma \;=\; (\tau_1 \circ S)|\sigma
$$
from which $L(G|\sigma) \leq L(S)$ follows.  By the `periodicity' of $G$ (Equation \eqref{eq_periodic}), for all $k \in \N$ we also have $L(G|\tau_k(K_0) = L(G|K_0)$.  Item (1) of the lemma follows from \cite[Thm 4.2.3.5]{EvansGariepy} and from the above estimates, where
$$
\|DG\|_{L^\infty(E_2^c)} \;\leq\; \max\left\{ 1, L(G|K_0), L(G|\sigma_{ab}^1), L(S)\right\}.
$$
Using the explicit formula in Equation \eqref{eq_inversemap}, the case of $G^{-1}$ follows similarly.

To prove item (2), let $\ell$ be any line segment that does not intersect $\B$.  The restriction $G|\ell$ is bi-Lipschitz with $L(G|\ell) \leq C$.   Since $\partial\B$ is compact, it follows from Lemma \ref{lemma_loclip} that the restriction $G|\partial\B$ is bi-Lipschitz.  

Let $x_1$ and $x_2$ be arbitrary points in $\B^c$ and let $\ell$ be the line segment in $\R^n$ which joins $x_1$ to $x_2$.  If $\ell$ crosses through $\B$, then let $y_1$ and $y_2$ be points on $\ell \cap \partial\B$, where $|x_1 - y_1| < |x_1 - y_2|$.  Since $\ell$ is a geodesic, we have the identity
$$
|x_1 - x_2| \;=\; |x_1 - y_1| + |y_1 - y_2| + |y_2 - x_2|.
$$
The Triangle inequality then implies that
\begin{eqnarray*}
|G(x_1) - G(x_2)| &\leq& 
|G(x_1) - G(y_1)| + |G(y_1) - G(y_2)| + |G(y_2) - G(x_2)| \\ &\leq&
C\,(|x_1 - y_1| + |y_2 - x_2|) + L(G|\partial\B)\,|y_1 - y_2| \\ &\leq&
\big( C + L(G|\partial\B) \big) \, (|x_1 - y_1| + |y_1 - y_2| + |y_2 - x_2|) \\ &=&
\big( C + L(G|\partial\B) \big) \, |x_1 - x_2|.
\end{eqnarray*}
Again, the argument is symmetric for $G^{-1}$, so this proves the lemma.
\end{proof}

Theorem \ref{thm_disjointcase} now follows easily from Lemma \ref{lemma_identity}, and a more general version of the theorem follows from Lemma \ref{lemma_identitynonballs}.  As in \cite[Lemma 2]{Gehring}, one takes compositions with the extension, its inverse, and a radial stretch map.

\begin{proof}[Proof of Theorem \ref{thm_disjointcase}]
By composing $g$ with linear maps, we may assume that $E_1$, $E_2$, $B_1$ and $B_2$ are subsets of $\B$, that $0 \in E_2$, and that 
$\B^c \subset g(\B^c)$.
Choose $r_1, r_2 \in (0,1)$ so that $B(0,r_1) \subset E_2$ and that $E_1 \cup E_2 \subset B(0,r_2)$.

Let $\rho : [0,\infty) \to [0,\infty)$ be a smooth increasing function so that 
$\rho\big([0,r_1]\big) = [0,r_2]$ and $\rho\big([1,\infty)\big) = [1,\infty)$.
Define a homemorphism $R : \R^n \to \R^n$ by
\begin{equation} \label{eq_radialstretch}
R(x) \;:=\; 
\left\{\begin{array}{ll}
\rho(|x|) \cdot |x|^{-1}x, & \textrm{if } x \neq 0 \\
0, & \textrm{if } x = 0.
\end{array}\right.
\end{equation}
Clearly, $R$ is of class $\LW$ and bi-Lipschitz, and maps $B(0,r_1)$ onto $B(0,r_2)$.

Putting $E_1' :=(g \circ R)(E_1)$ and $E_2' := \big( (g \circ R) (E_2^c) \big)^c$, Lemma \ref{lemma_changevari} implies that 
$$
h := g \circ R \circ g^{-1} : (E_1' \cup E_2')^c \to (B_1 \cup B_2)^c
$$ 
is also a homeomorphism of class $\LW$. Since $R|\B^c = \id|\B^c$, we further obtain
\begin{equation} \label{eq_identitydata}
h|\B^c \;=\; 
(g \circ R \circ g^{-1})|\B^c \;=\; 
\id|\B^c.
\end{equation}
So with $E_1'$ and $E_2'$ in place of $E_1$ and $E_2$, respectively, $h$ satisfies Equation \eqref{eq_identityball} and the other hypotheses of Lemma \ref{lemma_identity}.  As a result, there exists a homeomorphism $H : (E_2')^c \to B_2^c$ of class $\LW$ and a neighborhood $N'$ of $\del E_2'$ so that
$$
h|(N' \cap (E_2')^c) \;=\;
H|(N' \cap (E_2')^c).
$$
Let $G := H \circ g \circ R^{-1}$.  The open set 
$$
N \;:=\; (R \circ g^{-1})(N' \setminus (\bar{B}_1 \cup \bar{B}_2))
$$ 
contains $\del E_2$, and by Lemma \ref{lemma_changevari}, the map $G$ is of class $\LW$.  Moreover, for each $x \in N \setminus E_2$, there is a $y \in N' \setminus D_2'$ so that $x = (R \circ g^{-1})(y)$ and therefore
\begin{eqnarray*}
G(x) &=&
(H \circ g \circ R^{-1})\big((R \circ g^{-1})(y)\big) \;=\; H(y) \\ &=& 
h(y) \;=\; (g \circ R \circ g^{-1})\big((g \circ R^{-1})(x)\big) \;=\; g(x).
\end{eqnarray*}
We thereby obtain $g = G$ on $N \cap E_2^c$, as desired.
\end{proof}

\section{Extensions of Homeomorphisms of Class $\LW$ between Collars}
\label{sect_collars}

\subsection{Generalized Inversions} \label{sect_inversions}

To pass to the configurations of domains in Theorem \ref{thm_mainthm}, we will use \emph{generalized inversions}.  For fixed $a,r > 0$, these are homeomorphisms $I_{a,r} : \R^n \setminus \{0\} \to \R^n \setminus \{0\}$ of the form
$$
I_{a,r}(x) \;:=\; r^{a+1}|x|^{-(a+1)}x.
$$
Indeed, the inverse map satisfies $(I_{a,r})^{-1} = I_{1/a,\,r}$, as well as the estimate
\begin{equation} \label{eq_invinverse}
|x|^{a+1} \;=\;
\big( r^{1/a+1}|I_{a,r}(x)|^{-1/a} \big)^{a+1} \;\approx\; |I_{a,r}(x)|^{-(1/a + 1)}.
\end{equation}
For derivatives of $I_{a,r}$, an elementary computation gives
\begin{equation} \label{eq_invderiv}
|D^kI_{a,r}(x)| \;\lesssim\;
r^{a+1} |x|^{-(a+k)}
\end{equation}
and similarly, for the Jacobian determinant $JI_{a,r} := |\det(DI_{a,r})|$ we have
\begin{equation} \label{eq_invjacobian}
JI_{a,r}(x) \;\leq\;
n \, r^{n(a+1)}|x|^{-n(a+1)} \;\approx\;
|I_{a,r}(x)|^{n(a+1)/a}.
\end{equation}
If $a = 1$, then $I_{1,r}$ is conformal and maps spheres to spheres.  In general, the map $I_{a,r}$ possesses weaker properties which are sufficient for our purposes.  For instance, it preserves radial rays, or sets of the form
$\{ \lambda x : \lambda > 0 \}$ for some $x \in \R^n \setminus \{ 0 \}$.

Another property, stated below, is used in the proof of Theorem \ref{thm_mainthm} under the following hypotheses.  To begin, write $B_1 = B(t,r_1)$ and $B_2 = B(z,r_2)$, where $\bar{B}_1 \subset B_2$.  By composing with linear maps, we may assume that
\begin{itemize}
\item[(H1)]
The $x_n$-coordinate axis crosses through the points $t$ and $z$, with $t_n \leq z_n \leq 0$.  As a result, the `south poles' 
$\tau := t - r_1\vec{e}_n$ on $\bar{B}_1$ and
$\zeta := z - r_2\vec{e}_n$ on $\bar{B}_2$
satisfy $\zeta_n < \tau_n$ and $|\zeta - \tau| = \dist(\bar{B}_1, B_2^c)$.

\item[(H2)]
There exists $r \in (0,r_2)$ so that the sphere $\del B(0,r)$ is tangent to both 
$\del B_1$ and $\del B_2$, with $B(0,r) \subset B_2 \setminus B_1$.  In particular, this gives $r_1 < |t_n|$.
\end{itemize}

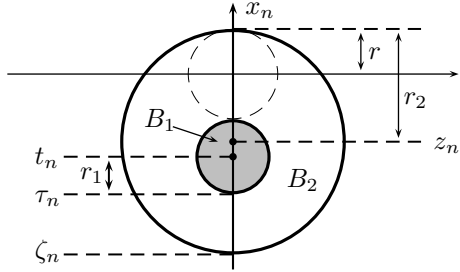
\begin{figure}[h]
\centering
\begin{pspicture*}(-3,-.7)(3,2.85)
%
\pscircle[fillcolor=lightgray,fillstyle=solid,framearc=.2,linewidth=1.2pt](0,.8){.5}
\pscircle*(0,.8){.05}
\psline[linewidth=.75pt,linestyle=dashed](-2.25,.8)(0,.8)
\psline[linewidth=.75pt,linestyle=dashed](-2.25,.3)(0,.3)
\put(-2.75,.7){ $t_n$ }
\psline[linewidth=.75pt]{<->}(-1.65,.35)(-1.65,.75)
\put(-2.15,.5){ $r_1$ }
\put(-1.3,1.2){ $B_1$ }
\psline[linewidth=.5pt]{->}(-.8,1.2)(-.2,1)
\put(-2.75,.2){ $\tau_n$ }
\psline[linewidth=.75pt,linestyle=dashed](-2.25,-.5)(0,-.5)
\put(-2.75,-.55){ $\zeta_n$ }
%
\pscircle[linewidth=1.2pt](0,1){1.5}
\pscircle*(0,1){.05}
\psline[linewidth=.75pt,linestyle=dashed](0,2.5)(2.5,2.5)
\psline[linewidth=.5pt]{<->}(1.7,1.95)(1.7,2.45)
\put(1.8,2.1){$r$}
\psline[linewidth=.5pt]{<->}(2.2,1.05)(2.2,2.45)
\put(2.15,1.5){ $r_2$ }
\psline[linewidth=.75pt,linestyle=dashed](0,1)(2.5,1)
\put(2.55,0.9){ $z_n$ }
\put(.6,.4){ $B_2$ }
\psline{->}(0,-.7)(0,2.85)
\put(.05,2.7){ $x_n$ }
\psline[linewidth=.5pt]{->}(-3,1.9)(3,1.9)
\pscircle[linewidth=.3pt,linestyle=dashed](0,1.9){.6}
%
\end{pspicture*}
\caption{A possible configuration for $B_1$, $B_2$, and $B(0,r)$.}
\end{figure}

\begin{lemma} \label{lemma_slabcond}
Let $a \in (0,1)$.  If $B_1$ and $B_2$ are balls in $\R^n$ with $\bar{B}_1 \subset B_2$ and which satisfy hypotheses (H1) and (H2), then there exist real numbers $c_1 < c_2$ so that $I_{a,r}(B_1) \subset \{ x_n < c_1 \}$ and $I_{a,r}(B_2^c) \subset \{ x_n > c_2 \}$.
\end{lemma}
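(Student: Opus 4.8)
The key structural facts about $I_{a,r}$ are that it preserves every radial ray from the origin and fixes $\partial B(0,r)$ pointwise, so I would factor it as $I_{a,r}=\Lambda_a\circ J$, where $J:=I_{1,r}\colon x\mapsto r^2|x|^{-2}x$ is the ordinary conformal inversion and $\Lambda_a(y):=r^{1-a}|y|^{a-1}y=r(|y|/r)^a\,\tfrac{y}{|y|}$ is a purely radial ``power map''. The map $\Lambda_a$ fixes $\partial B(0,r)$ pointwise, carries $B(0,r)\setminus\{0\}$ onto itself, preserves the sign of the last coordinate, and --- since $r/|y|\ge1$ on $\overline{B(0,r)}$ --- satisfies $|\Lambda_a(y)_n|\ge|y_n|$ there, i.e.\ it only pushes points of that ball radially outward (hence away from the hyperplane $\{x_n=0\}$). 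The first step is then to identify $J(B_1)$ and $J(B_2^c)$ explicitly. Hypotheses (H1)--(H2) force $B_1$ to be externally tangent to $B(0,r)$ at $-re_n$ and $B_2$ to be internally tangent at $+re_n$, with all centres on the $x_n$-axis; applying the conformal inversion turns $B_1$ into a ball $J(B_1)\subset\overline{B(0,r)}$ tangent to $\partial B(0,r)$ at $-re_n$ with axial span $[-r,\,-r^2/(r+2r_1)]$, and turns $B_2^c$ into a ball $J(B_2^c)\subset\overline{B(0,r)}$ tangent at $+re_n$, containing the origin, with axial span $[-r^2/(2r_2-r),\,r]$.

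Since $\overline{B_1}\subset B_2$ the sets $B_1$ and $B_2^c$ are disjoint, so $J(B_1)$ and $J(B_2^c)$ are disjoint balls; two disjoint balls with centres on a common line have disjoint orthogonal projections onto that line, which in particular recovers the numerical inequality $r+2r_1<2r_2-r$ (equivalently $r+r_1<r_2$, the content of $\zeta_n<\tau_n$ in (H1)). Thus $J(B_1)\subset\{x_n<-r^2/(r+2r_1)\}$ and $J(B_2^c)\subset\{x_n>-r^2/(2r_2-r)\}$ already gives a separating slab for the conformal inversion, and it remains to see that a slab survives the radial squeeze $\Lambda_a$; I would take $c_1$ from $\sup_{y\in J(B_1)}\Lambda_a(y)_n$ and $c_2$ from $\inf_{y\in J(B_2^c)}\Lambda_a(y)_n$.

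The step I expect to be the main obstacle is precisely that final comparison: the monotonicity bound $|\Lambda_a(y)_n|\ge|y_n|$ is too lossy on $J(B_1)$ (it reflects only the case $a=1$), so I would instead compute $\sup_{y\in J(B_1)}\Lambda_a(y)_n$ exactly. On the sphere $\partial J(B_1)$ the quantity $|y|^2$ is a known affine function of $y_n$, so maximizing $\Lambda_a(y)_n=-r^{1-a}|y_n|\,|y|^{-(1-a)}$ is a one-variable calculus problem; its critical point can be written in closed form, with a short case split according to whether it falls inside the relevant $y_n$-interval (it does exactly when $a<r_1/(r+r_1)$), the endpoint case giving the clean value $-r\bigl(r/(r+2r_1)\bigr)^a$. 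On the other side, $\inf_{y\in J(B_2^c)}\Lambda_a(y)_n$ is attained on the $x_n$-axis and equals $-r\bigl(r/(2r_2-r)\bigr)^a$. Comparing the two extrema then reduces to an elementary inequality between the powers $\bigl(r/(r+2r_1)\bigr)^a$ and $\bigl(r/(2r_2-r)\bigr)^a$ --- plus, in the interior-critical-point case, some bookkeeping with the factors $1\pm a$ --- which I would settle using $r+2r_1<2r_2-r$. Alternatively one can bypass the factorization and carry out the same optimization directly for $I_{a,r}$ in polar coordinates, reducing to a planar profile by the rotational symmetry of the configuration about the $x_n$-axis; the computations are identical.
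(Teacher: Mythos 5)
Your reduction is set up correctly and, up to the last step, is actually more careful than the paper's own argument: the factorization $I_{a,r}=\Lambda_a\circ J$, the identification of $J(B_1)$ and $J(B_2^c)$ as balls inside $\bar{B}(0,r)$ tangent at the two poles with the axial spans you state, the computation $\inf_{J(B_2^c)}\Lambda_a(y)_n=-r\big(r/(2r_2-r)\big)^a$ (the image of the south pole $\zeta$, i.e.\ the content of Claim \ref{claim_toohigh}), and your observation that for $a<r_1/(r+r_1)$ the supremum of $\Lambda_a(y)_n$ over $J(B_1)$ is attained at an interior critical point and exceeds $-r\big(r/(r+2r_1)\big)^a$ are all correct. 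The gap is exactly the step you flagged as the main obstacle: in the interior-critical-point case the comparison of the two extrema cannot be ``settled using $r+2r_1<2r_2-r$,'' because it is false. Take $n=2$, $r=1$, $B_1=B((0,-2),1)$, $B_2=B((0,-1.5),2.5)$ (hypotheses (H1)--(H2) hold) and $a=1/10$, so $a<r_1/(r+r_1)=1/2$. The point $y=(\sqrt{8}/3,\,-5/3)\in\partial B_1$ has $|y|^2=11/3$ and $I_{a,1}(y)_n=-(5/3)(11/3)^{-0.55}\approx-0.816$, while $\zeta=(0,-4)$ gives $I_{a,1}(\zeta)_n=-4^{-0.1}\approx-0.871$. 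Hence any admissible $c_1$ must satisfy $c_1\geq-0.816$ and any admissible $c_2$ must satisfy $c_2<-0.871$, so no separating slab exists: the statement itself fails for small $a$, and no amount of bookkeeping can close your final case.

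For comparison, the paper works directly with $I_{a,r}$ in polar form (your ``alternative'' route) and asserts in Claim \ref{claim_toolow} that $\sup_{I(B_1)}x_n$ is attained at $I(\tau)$; the contradiction there rests on the inequality $\cos\theta\leq(\cos\theta)^{1/a}$, which the paper itself says ``follows from $a\geq1$'' --- the reverse of what holds for $a\in(0,1)$, which is the range the lemma claims and the range the main theorem needs (there $a<n/p-1$, forcing $a$ small as $p\to n$). So your case split detects precisely the phenomenon the paper overlooks: the extremal point of $I_{a,r}(\partial B_1)$ migrates off the $x_n$-axis once $a<r_1/(r+r_1)$. Your argument does close the lemma in the endpoint case $a\geq r_1/(r+r_1)$; to salvage the general statement one would need an extra hypothesis of that kind (or a lower bound on $r_2$ in terms of $r$, $r_1$, $a$), not a sharper estimate.
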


The proof is a computation, and the basic idea is simple.  Though the bounded domains $I_{a,r}(B_1)$ and $I_{a,r}(B_2^c)$ may not be balls, the distance between them is still attained by the images of the `north' and `south' poles of $B_1$ and $B_2$, respectively.

\begin{proof}
Once again, let $\tau$ and $\zeta$ be the ``south poles'' of $B_1$ and $B_2$, respectively.  From Hypotheses (H1) and (H2), we have 
$$
\zeta_n \;=\; -|\zeta| \;<\; -|\tau| \;=\; \tau_n.
$$
and putting $I_ := I_{a,r}$, the image points $\tau' := I(\tau)$ and $\zeta' := I(\zeta)$ therefore satisfy
\begin{equation} \label{eq_comparepoles}
\tau'_n \;=\; -|\tau'| \;<\; -|\zeta'| \;=\; \zeta'_n.
\end{equation}

\begin{claim} \label{claim_toolow}
For all $y' \in I(B_1)$, we have $y'_n < \tau'_n$.
\end{claim}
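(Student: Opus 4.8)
The plan is to read Claim~\ref{claim_toolow} as the assertion that the function $y \mapsto I(y)_n$ attains its maximum over $\bar{B}_1$ only at the south pole $\tau$, and to prove this by reducing to a one-variable calculus inequality. Write $m := |t_n|$; hypothesis~(H2) gives $m > r_1$, so $\tau_n = t_n - r_1 = -(m+r_1) = -|\tau|$, and every $y \in \bar{B}_1$ satisfies $y_n \le t_n + r_1 < 0$. Since $I(y)_n = r^{a+1}\,y_n\,|y|^{-(a+1)}$, this shows $I(y)_n < 0$ throughout $\bar{B}_1$, so the claim is equivalent to saying that the positive quantity $g(y) := (-y_n)\,|y|^{-(a+1)}$ attains its minimum over $\bar{B}_1$ only at $y = \tau$, where $g(\tau) = (m+r_1)^{-a}$.

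First I would remove the interior of $B_1$ from consideration. On a fixed horizontal slice $\{x_n = s\} \cap \bar{B}_1$ with $s < 0$ we have $g(y) = (-s)\,|y|^{-(a+1)}$, a strictly decreasing function of $|y|$; since $|y|$ is maximized on the slice at its boundary point (which lies on $\partial B_1$), $g$ restricted to that slice is minimized only there. Hence it suffices to show $g(y) > (m+r_1)^{-a}$ for all $y \in \partial B_1 \setminus \{\tau\}$. For $y \in \partial B_1$, write $v := (y-t)/r_1 \in \mathbb{S}^{n-1}$ and $c := v_n \in [-1,1]$; using that $t$ lies on the $x_n$-axis, one gets $-y_n = m - r_1 c$ and $|y|^2 = m^2 + r_1^2 - 2 m r_1 c$, with $c = -1$ corresponding to $y = \tau$. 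The claim reduces to showing that
\[
h(c) \;:=\; \frac{m - r_1 c}{\big( m^2 + r_1^2 - 2 m r_1 c \big)^{(a+1)/2}}
\]
satisfies $h(c) > h(-1)$ for every $c \in (-1, 1]$.

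The next step is a differentiation. Setting $Q(c) := m^2 + r_1^2 - 2 m r_1 c$, which is positive on $[-1,1]$ because $m > r_1$, a direct computation yields
\[
h'(c) \;=\; r_1\,Q(c)^{-(a+3)/2}\,\phi(c), \qquad \phi(c) \;:=\; a m^2 - r_1^2 + (1-a)\,m r_1\,c .
\]
Thus $h'$ has the sign of the affine function $\phi$, which is strictly increasing since $a < 1$; and one computes $\phi(-1) = (m+r_1)\,(a m - r_1)$. Hence, provided the configuration is normalized so that $r_1 \le a\,|t_n|$, we have $\phi(-1) \ge 0$, so $\phi > 0$ and $h' > 0$ on $(-1,1]$, so $h$ is strictly increasing on $[-1,1]$ with $h(-1)$ its unique minimum there. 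Unwinding the two reductions gives $I(y)_n < I(\tau)_n = \tau'_n$ for every $y \in B_1$, which is the claim.

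The hard part is the sign condition $\phi(-1) = (m+r_1)\,(a\,|t_n| - r_1) \ge 0$, i.e.\ $r_1 \le a\,|t_n|$: if this fails, then $h$ need not be monotone on $[-1,1]$ and $\tau$ need not be extremal, so the inequality $r_1 < |t_n|$ supplied by~(H2) is not by itself enough. I therefore expect this to be exactly the point at which the normalizations behind (H1)--(H2) must be used in full---most naturally, after first composing with a translation along the $x_n$-axis so as to make the separation $|t_n| - r_1$ (equivalently, the radius $r$ of the tangent sphere in~(H2)) large relative to $r_1$ and $a$. Identifying precisely which normalization forces $r_1 \le a\,|t_n|$, and verifying that it is consistent with the hypotheses of Lemma~\ref{lemma_slabcond}, is where the genuine work lies.
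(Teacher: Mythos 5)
Your reduction to the sphere and the computation of $h'$ are correct, and they amount to a sharpened form of the paper's own argument: the paper assumes a boundary point $y\neq\tau$ with $I(y)_n=\tau'_n$, derives $|y|=(r_1-t_n)\cos^{1/a}\theta$ on one hand and $|y|<(r_1-t_n)\cos\theta$ on the other, and then obtains its contradiction from $\cos\theta\le\cos^{1/a}\theta$ --- an inequality that, as the paper itself remarks, ``follows from $a\ge 1$''. Your factorization $\phi(-1)=(m+r_1)(am-r_1)$, with $m=|t_n|$, is exactly the quantitative content of that comparison: the claim holds precisely when $a\ge r_1/|t_n|$ (in particular for all $a\ge1$), and fails otherwise.

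The gap you flag at the end is therefore real, but it cannot be closed by any normalization: hypotheses (H1)--(H2) force $|t_n|-r_1=r=r_2-|z_n|$, which determines the translation along the $x_n$-axis uniquely, and dilations rescale $r_1$ and $|t_n|$ together, so $r_1/|t_n|$ is a similarity invariant of the given pair $(B_1,B_2)$; nothing makes it $\le a$ when $a$ is small (and in the application to Theorem \ref{thm_mainthm} one must take $a<n/p-1$, which can be arbitrarily small). Concretely, with $a=\tfrac12$, $t=-e_n$, $r_1=\tfrac{9}{10}$, $z=-\tfrac{19}{20}\,e_n$, $r_2=\tfrac{21}{20}$, $r=\tfrac{1}{10}$, hypotheses (H1)--(H2) hold, yet the boundary point $y=t+r_1v$ with $v_n=-\tfrac{9}{10}$ has $I(y)_n\approx-0.718\,r^{a+1}>-0.726\,r^{a+1}=\tau'_n$, so Claim \ref{claim_toolow} fails (nearby interior points give points of $I(B_1)$ above the level $\tau'_n$); worse, the point with $v_n\approx 0.69$ maps to height $\approx-0.579\,r^{a+1}$, which is above $\zeta'_n\approx-0.707\,r^{a+1}$, so in this configuration no horizontal slab separates $I(B_1)$ from $I(B_2^c)$ and the conclusion of Lemma \ref{lemma_slabcond} itself fails. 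In short, your analysis is correct as far as it goes and has in fact located an error: under the standing hypothesis $a\in(0,1)$ the claim needs the extra condition $r_1\le a\,|t_n|$, which (H1)--(H2) do not provide, and the paper's own proof tacitly requires $a\ge1$.
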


Supposing otherwise, there exists $y \in \del B_1$ with $y \neq \tau$ and so that $y'$ 
has the same $n$th coordinate as $\tau'$. 
Let $\theta$ be the angle between the $x_n$-axis and the line crossing through $y'$ 
and $0$.  By our hypotheses, we have $t_n \leq 0$ and $0 < \theta < \frac{\pi}{2}$ and therefore $0 < \cos\theta < 1$.  From $|\tau| = r_1 - t_n$, we obtain
$$
|y'| \;=\;
\frac{|\tau'|}{\cos\theta} \;=\;
\frac{r^{a+1}|\tau|^{-a}}{\cos\theta} \;=\;
\frac{r^{a+1}}{(r_1 - t_n)^a\cos\theta}
$$
so from $|y'| = 
r^{a+1}|y|^{-a}$ and the above identity, we further obtain
\begin{equation} \label{eq_trig1}
|y| \;=\; 
r^{(a+1)/a} \left[ 
\frac{r^{a+1}}{(r_1 - t_n)^a\cos\theta} 
\right]^{-1/a} \;=\; 
(\cos\theta)^{1/a}(r_1 - t_n).
\end{equation}
On the other hand, $I$ preserves radial rays and hence angles between radial rays.  As a result, $y \in \del B_1$ (and the Law of Cosines) imply that 
\begin{eqnarray*}
r_1^2 &=& 
|y|^2 + t_n^2 - 2|y|t_n \cos\theta, \\
\textrm{so } |y| &=&
-t_n\cos\theta \,+\, \sqrt{r_1^2 - t_n^2\sin^2\theta}.
\end{eqnarray*}
From Hypothesis (H2) once again, we obtain $r_1 < |\tau_n|$ and hence
$$
|y| \;<\; 
-t_n\cos\theta \,+\, \sqrt{r_1^2 - r_1^2\sin^2\theta} \;=\;
(r_1-t_n) \cos\theta.
$$
This is in contradiction with Equation \eqref{eq_trig1}, since the inequality $\cos\theta \leq (\cos\theta)^{1/a}$ follows from $a \geq 1$.  The claim follows.

\begin{claim} \label{claim_toohigh}
For all $w' \in I(B_2^c)$, we have $\zeta'_n < w'_n$.
\end{claim}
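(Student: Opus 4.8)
The plan is to mirror the argument used for Claim \ref{claim_toolow}, but now working with $B_2^c$ in place of $B_1$ and the point $\zeta$ in place of $\tau$. First I would argue by contradiction: suppose there exists $w' \in I(B_2^c)$ with $w'_n \leq \zeta'_n$. Since $\zeta'_n = -|\zeta'|$ is the lowest point of the radial ``cap'' through $\zeta'$, and $I$ preserves radial rays, one can then produce a point $w \in \del B_2$, $w \neq \zeta$, whose image $w' = I(w)$ has $n$th coordinate equal to $\zeta'_n$ (replacing $w$ by the boundary point on the same ray if necessary, using that $I(B_2^c)$ is bounded away from $0$ only on rays meeting $B_2^c$ — here one uses that $0 \in B(0,r) \subset B_2$, so every radial ray through a point of $B_2^c$ first exits $B_2$ through $\del B_2$). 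Let $\theta$ be the angle between the $x_n$-axis and the ray through $w'$ and $0$; since $z_n \leq 0$ and $w \neq \zeta$, we again have $0 < \theta < \pi/2$, so $0 < \cos\theta < 1$.

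Next I would run the same two computations as before. On the one hand, from $w'_n = \zeta'_n$, i.e.\ $|w'|\cos\theta = |\zeta'|$, together with $|\zeta'| = r^{a+1}|\zeta|^{-a}$, $|\zeta| = r_2 - z_n$, and $|w'| = r^{a+1}|w|^{-a}$, I get the analogue of Equation \eqref{eq_trig1}:
$$
|w| \;=\; (\cos\theta)^{1/a}\,(r_2 - z_n).
$$
On the other hand, applying the Law of Cosines to the triangle with vertices $0$, $z e_n/|z|\cdot 0$... — more precisely to the triangle $0$, $z$, $w$ with $w \in \del B_2 = \del B(z,r_2)$ and angle $\theta$ at the vertex $0$ (here using that $I$ preserves the angle, so the angle between the ray through $w$ and the $x_n$-axis is also $\theta$) — gives
$$
|w| \;=\; -z_n\cos\theta \,+\, \sqrt{r_2^2 - z_n^2\sin^2\theta}.
$$
The point where the sign of the inequality must flip relative to Claim \ref{claim_toolow} is the following: there $a \ge 1$ was used, but here $a \in (0,1)$, so $(\cos\theta)^{1/a} < \cos\theta$; and the geometric comparison also reverses, since now we are on the \emph{outside} of $B_2$. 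Concretely, I expect to need $r_2 \geq |z_n|$ (equivalently, the ball $B_2$ contains the origin in a ``deep'' enough way — which is exactly what Hypothesis (H2) provides, via $B(0,r)\subset B_2$ forcing $|z_n| < r_2$), whence $\sqrt{r_2^2 - z_n^2\sin^2\theta} \geq \sqrt{r_2^2 - z_n^2}\, \ge |z_n|\text{-free bound}$... and so $|w| \geq (r_2 - z_n)\cos\theta$, contradicting $|w| = (\cos\theta)^{1/a}(r_2-z_n) < (\cos\theta)(r_2 - z_n)$.

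Finally, granting both claims, Lemma \ref{lemma_slabcond} follows by setting $c_1 := \tau'_n$ and $c_2 := \zeta'_n$: Claim \ref{claim_toolow} gives $I_{a,r}(B_1) \subset \{x_n < c_1\}$, Claim \ref{claim_toohigh} gives $I_{a,r}(B_2^c) \subset \{x_n > c_2\}$, and $c_1 < c_2$ is precisely Equation \eqref{eq_comparepoles}. The main obstacle I anticipate is bookkeeping the direction of every inequality: the exponent condition $a\in(0,1)$ versus $a\ge 1$ and the ``inside vs.\ outside'' geometry both flip, and one must check they flip consistently so that the contradiction still closes; in particular one must verify carefully that Hypothesis (H2) delivers the needed inequality $|z_n| \le r_2$ (or its appropriate strict form) for the outer ball, just as it delivered $r_1 < |t_n|$ for the inner one.
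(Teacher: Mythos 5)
Your proof is correct, and it reaches the contradiction by a genuinely more direct route than the paper's. Both arguments share the same setup: reduce to $w \in \partial B_2$, $w \neq \zeta$, at angle $\theta \in (0,\pi/2)$ from the negative $x_n$-axis, and use that $I$ preserves radial rays to pair the relation $|w| = (\cos\theta)^{1/a}(r_2 - z_n)$ (the analogue of Equation \eqref{eq_trig1}; note $r_2 - z_n = |\zeta| = 2r_2 - r$ under (H2)) with the law-of-cosines formula $|w| = -z_n\cos\theta + \sqrt{r_2^2 - z_n^2\sin^2\theta}$. The paper then squares and studies the auxiliary function $\psi(a)$, evaluating it at $a=1$ and as $a \to 0$ and asserting (without proof) a unique interior minimum on $(0,1)$; you instead observe that $|z_n| < r_2$ (which (H2) gives, indeed $|z_n| = r_2 - r$), so $\sqrt{r_2^2 - z_n^2\sin^2\theta} \geq r_2\cos\theta$ and hence $|w| \geq (r_2 - z_n)\cos\theta$, while $a \in (0,1)$ and $0<\cos\theta<1$ force $(\cos\theta)^{1/a} < \cos\theta$ and hence $|w| < (r_2 - z_n)\cos\theta$. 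This two-line comparison proves the same strict inequality the $\psi$-analysis is after, avoids the unproved monotonicity assertion, and mirrors the paper's own (direct) treatment of Claim \ref{claim_toolow}, with the exponent inequality correctly reversed for $a\in(0,1)$. Two small blemishes do not affect validity: the intermediate bound $\sqrt{r_2^2 - z_n^2\sin^2\theta} \geq \sqrt{r_2^2 - z_n^2}$ you wrote is not the one you need (the relevant comparison is against $r_2\cos\theta$, which is exactly what $r_2 \geq |z_n|$ yields), and your reduction of a general $w' \in I(B_2^c)$ with $w'_n \leq \zeta'_n$ to a boundary point with exact equality is stated loosely; but the paper performs the same tacit reduction, and your inequalities in fact establish the claim directly at every boundary point with $\theta \in (0,\pi/2)$, whence it holds on all of $B_2^c$ because $|I(w)|$ only decreases as $|w|$ increases along a fixed ray. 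Your concluding remark on deducing Lemma \ref{lemma_slabcond} from the two claims and Equation \eqref{eq_comparepoles} matches the paper.
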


Suppose there exists $w \in \del B_2$ so that $w \neq \zeta$ and $w'_n = \zeta'_n$. 
If $\alpha$ is the angle between $w$ and the $x_n$-axis, then a similar computation as above gives
$$
(2r_2-r)\cos^{1/a}\alpha \;=\;
|w| \;=\;
(r_2-r)\cos\alpha + \sqrt{r_2^2 - (r_2-r)^2\sin^2\theta}
$$
Computing further, we obtain $\psi(a) = r_2^2$, where $\psi : (0,\infty) \to (0,\infty)$ is given by
$$
\psi(a) \;:=\;
\big( (2r_2-r)\cos^{1/a}\alpha - (r_2-r)\cos\alpha \big)^2 + (r_2-r)^2\sin^2\alpha
$$
Clearly $\psi$ is smooth and an elementary computation shows that it attains a minimum at a unique point 
in $(0,1)$.  We observe that
$$
\psi(1) \;=\; 
r_2^2\cos^2\alpha \,+\; (r_2-r)^2\sin^2\alpha \;<\; r_2^2.
$$
Since $0 < \cos\alpha < 1$, we see that $\cos^{1/a}\alpha \to 0$ as $a \to 0$.  It follows that
$$
\lim_{a \to 0} \psi(a) \;=\;
\big( 0 + (r_2-r)\cos\alpha \big)^2 + (r_2-r)^2\sin^2\alpha \;=\; (r_2-r)^2 \;<\; r_2^2
$$
and therefore $\psi(a) < r_2^2$ holds for all $(0,1)$.  This is a contradiction, which proves Claim \ref{claim_toohigh}.
Combining both claims and Equation \eqref{eq_comparepoles}, the lemma follows.
\end{proof}

\subsection{From Doubly-Punctured Domains to Collars}

We now prove Theorem \ref{thm_mainthm}.
The argument requires several lemmas.

\begin{lemma} \label{lemma_checkinv}
Let $a > 0$ and let $D_1$, $D_2$, $B_1$, $B_2$, and $f$ be given as in Theorem \ref{thm_mainthm}.  If there exists $r > 0$ so that $\bar{B}(0,r) \subset D_2 \setminus D_1$ and $\bar{B}(0,r) \subset B_2 \setminus B_1$, and if $f(0) = 0$, then $I_{a,r} \circ f \circ I_{a,r}^{-1}$ is a homeomorphism of class $\LW$.
\end{lemma}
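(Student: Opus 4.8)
The plan is to verify each of the three defining conditions for the class $\LW$ directly, using the composition lemma together with the explicit derivative estimates for generalized inversions. Write $J := I_{a,r}$ for brevity, noting $J^{-1} = I_{1/a,\,r}$ by construction, and set $h := J \circ f \circ J^{-1}$. Since $f$ is a homeomorphism of the collar $\bar D_2 \setminus D_1$ onto $\bar B_2 \setminus B_1$ with $f(0) = 0$ and $\bar B(0,r)$ compactly contained in both collars, the conjugate $h$ is a well-defined homeomorphism between the appropriate punctured domains; moreover $h$ is locally bi-Lipschitz on the interior, because $J$ and $J^{-1}$ are bi-Lipschitz away from the origin (the singularity of $J$ at $0$ is removed by $f(0)=0$) and composition of locally bi-Lipschitz maps is locally bi-Lipschitz. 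So the real content is the second-order Sobolev regularity of $h$ and of $h^{-1}$.

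First I would handle $h$. Away from the origin, Lemma \ref{lemma_changevari} does not apply verbatim because $J^{-1}$ is only locally (not globally) of class $\LW$ on the relevant domain, but the chain rule formula \eqref{eq_changevari} still holds a.e. on any compact piece, so it suffices to obtain an $L^p_\loc$ bound for $D^2 h$ near $0$ (elsewhere $h$ is a composition of class-$\LW$ maps on domains bounded away from $0$, hence of class $\LW$ there by Lemma \ref{lemma_changevari}). Near $0$, expanding $D^2 h$ via the second line of \eqref{eq_changevari} applied twice produces a sum of terms, each a product of derivatives of $J$, of $f$, and of $J^{-1}$, evaluated at the appropriate points. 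I would bound the $J$- and $J^{-1}$-derivative factors using \eqref{eq_invderiv}, bound the $f$-derivative factors using that $f$ is bi-Lipschitz near $0$ (so $Df \in L^\infty$ locally and $|f(x)| \approx |x|$) together with $D^2 f \in L^p_\loc$, and then apply \eqref{eq_invinverse} and \eqref{eq_invjacobian} to convert quantities evaluated at $f(J^{-1}(x))$ into powers of $|x|$. After a change of variables (justified by \cite[Thm 2.2.2]{Ziemer}), the remaining integral over a punctured ball reduces to a bound of the form $\int_{B(0,\delta)} |x|^{-\beta} \, dx + \int_{B(0,\delta)} |x|^{-\gamma} |D^2 f(J^{-1}(x))|^p \, dx$; the first is finite once one checks $\beta < n$, and the second is finite by Hölder provided the exponents are compatible — and this is where the hypothesis $p < n$ (inherited from Theorem \ref{thm_mainthm}) enters. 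By the symmetry $h^{-1} = J \circ f^{-1} \circ J^{-1}$ (using $J^{-1} = I_{1/a,r}$ and that $f^{-1}$ is again of class $\LW$ with $f^{-1}(0)=0$ and $Df^{-1}$ locally bounded near $0$), the identical estimate gives $D^2 h^{-1} \in L^p_\loc$. The first-order terms $Dh$, $Dh^{-1}$ are bounded near $0$ by the locally bi-Lipschitz property already noted, so no separate estimate is needed there.

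I expect the main obstacle to be the bookkeeping of exponents in the near-origin estimate: one must track how each inversion contributes powers of $|x|$ through \eqref{eq_invderiv}, how the change of variables contributes a factor of the Jacobian \eqref{eq_invjacobian}, and how the shift from arguments $f(J^{-1}(x))$ to $|x|$ via \eqref{eq_invinverse} affects the powers, and then check that the resulting exponent on $|x|$ is strictly greater than $-n$ (equivalently, that the singular integral converges) — and that this is exactly what $p < n$ buys. The parameter $a$ can be taken as any positive number here (it is fixed later, in Lemma \ref{lemma_slabcond}, to lie in $(0,1)$ to get the slab separation), so I would carry it symbolically and simply record the constant's dependence on $a$, $p$, $n$, $r$, and the bi-Lipschitz constant of $f$ near $0$. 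A minor secondary point is to confirm that the chain-rule identities \eqref{eq_changevari} are legitimately applied across the punctured neighborhood: this is fine because they hold on each compact subset of the (open, punctured) domain, and $L^p_\loc$ membership is a local condition, so it is enough to cover a punctured ball by countably many such compacta.
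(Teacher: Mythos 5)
The key point of this lemma---and the reason the paper's proof is three lines long---is that there is nothing singular to estimate. Since $D_2$ is bounded, say $D_2 \subset B(0,M)$, the identity $|I_{a,r}(x)| = r^{a+1}|x|^{-a}$ shows that $\Omega := I_{a,r}\big(D_2\setminus(\bar D_1\cup\{0\})\big)$ lies in $\R^n\setminus B(0,\e)$ with $\e = r^{a+1}M^{-a}$, and likewise for $\Omega' := I_{a,r}\big(B_2\setminus(\bar B_1\cup\{0\})\big)$. Hence each of the three factors $I_{a,r}^{-1}|\Omega$, $f$, and $I_{a,r}|(B_2\setminus(\bar B_1\cup\{0\}))$ is a class-$\LW$ homeomorphism of its open, punctured domain: the two restricted inversions are smooth diffeomorphisms away from the origin, and $W^{2,p}_{\loc}$ is a local condition on the open set, so the singularity of $I_{a,r}$ at $0$ and the unboundedness of $\Omega$ are both invisible. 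Lemma \ref{lemma_changevari} then applies verbatim, for every $a>0$ and without using $p<n$.

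Your proposal misses this soft observation and, in doing so, mislocates the difficulty. The domain of $h = I_{a,r}\circ f\circ I_{a,r}^{-1}$ is bounded away from the origin (the inversion pushes the puncture of the collar out to infinity), so there is no ``near $0$'' region of the domain of $h$ on which to run your $D^2h$ estimate; the region you are implicitly worried about is a neighborhood of infinity in $\Omega$, where $W^{2,p}_{\loc}$ imposes no condition at all. Consequently the claim that ``this is where the hypothesis $p<n$ enters'' is wrong: $p<n$ plays no role in this lemma. The weighted integral machinery you describe---change of variables via \eqref{eq_invjacobian}, conversion of arguments via \eqref{eq_invinverse}, convergence of $\int |x|^{-\beta}\,dx$ near the origin---is genuinely needed, but in the later Lemma \ref{lemma_checkinvid}, where the conjugation runs in the opposite direction ($I^{-1}\circ G\circ I$, with $G$ defined on an unbounded set and the conjugate defined on a domain that \emph{does} contain the origin), and where the restriction $a < n/p-1$ is imposed precisely to make those integrals converge. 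As written, your argument would at best establish a global integrability statement that the lemma does not assert, while obscuring the fact that the asserted local statement is immediate.
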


\begin{proof}
Since $\Omega := I_{a,r}(D_2\setminus (\bar{D}_1 \cup \{0\}))$ and $I_{a,r}(B_2\setminus (\bar{B}_1 \cup \{0\}))$ lie in $ \R^n \setminus B(0,\e)$, for some $\e > 0$,
the restricted maps $I_{a,r}^{-1}| \Omega$ 
and
$I_{a,r}| \Omega'$
are diffeomorphisms.  By Lemma \ref{lemma_changevari}, it follows that 
$g := I_{a,r} \circ f \circ I_{a,r}^{-1} : \Omega \to \Omega'$ 
is of class $\LW$.
\end{proof}

\begin{lemma} \label{lemma_invbilip}
Let $E_1$, $E_2$, $C_1$, $C_2$, $B$, and $g$ be given as in Lemma \ref{lemma_identitynonballs}, and let $G$ be given as in Equation \eqref{eq_periodicext2}.  If $0 \in E_2$, if $0 \in C_2$, and if there exists $r > 0$ so that $B = B(0,r)$, then for each $a > 0$, the map 
$$
F(x) \;:=\;
\left\{
\begin{array}{ll}
\big(I^{-1} \circ G \circ I\big)(x), & x \neq 0 \\
0, & x = 0
\end{array}
\right.
$$
is a locally bi-Lipschitz homeomorphism.
\end{lemma}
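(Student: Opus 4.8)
\textbf{Proof proposal for Lemma \ref{lemma_invbilip}.}

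The plan is to localize away from the origin and use Lemma \ref{lemma_bilipoutsideball}, then treat the origin separately by a direct estimate. First I would observe that $F$ is a homeomorphism: it is a composition of homeomorphisms on $E_2^c \setminus \{0\}$ (using that $I = I_{a,r}$ maps $\R^n \setminus \{0\}$ onto itself, that $G$ maps $E_2^c$ to $C_2^c$ with $0 \in E_2$ and $0 \in C_2$, and that $I^{-1} = I_{1/a,r}$ fixes the complement of $\{0\}$), and we have decreed $F(0) = 0$; continuity at $0$ will come out of the estimate below. Next, for any compact set $K \subset E_2^c$ with $0 \notin K$, the set $I(K)$ is a compact subset of $C_2^c \setminus \{0\}$ contained in some annulus $\{ \e \le |x| \le M \}$, and on such an annulus both $I$ and $I^{-1}$ are bi-Lipschitz (smooth diffeomorphisms with bounded derivatives, by Equation \eqref{eq_invderiv}), while $G|I(K)$ is bi-Lipschitz by Lemma \ref{lemma_identitynonballs}; hence $F|K$ is bi-Lipschitz. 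So $F$ is locally bi-Lipschitz at every point of $E_2^c \setminus \{0\}$.

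It remains to establish a bi-Lipschitz bound on a neighborhood of the origin, and this is the main obstacle. Since $B = B(0,r) \subset E_2 \cap C_2$, for points near $0$ the arguments $I(x)$ have large modulus and lie in $B^c$, where by item (2) of Lemma \ref{lemma_bilipoutsideball} the restriction $G|B^c$ is \emph{globally} bi-Lipschitz, say with constant $L_0 \ge 1$. I would combine this with the size estimate \eqref{eq_invinverse}, which gives $|I(x)| \approx |x|^{-1/a} r^{1/a + 1}$ (up to fixed constants), to control $|F(x)| = |I^{-1}(G(I(x)))|$ for small $|x|$. Writing $u = I(x)$, $v = I(y)$ with $|u|, |v|$ large, we have $|G(u) - G(v)| \approx |u - v|$ with constants $L_0^{\pm 1}$; the remaining task is to transfer this comparison through $I^{-1}$, i.e. to show $|I^{-1}(G(u)) - I^{-1}(G(v))| \approx |I^{-1}(u) - I^{-1}(v)|$ when $|u|,|v|$ are comparably large and $G$ distorts moduli only boundedly. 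The cleanest route is the elementary estimate that on the region $\{ |x| \ge \rho \}$ the map $I$ is bi-Lipschitz with constants depending only on $\rho$, $a$, $r$, $n$ (integrate \eqref{eq_invderiv}); since $G|B^c$ distorts distances by a factor in $[L_0^{-1}, L_0]$, it also distorts moduli by a bounded factor, so $G(u)$ and $G(v)$ stay in a fixed region $\{|x| \ge \rho'\}$ whenever $u, v$ do. Then $F = I^{-1} \circ (G|B^c) \circ I$ is a composition of three bi-Lipschitz maps on $\{ |x| \ge \rho \} \cap B^c$, with a combined constant independent of $\rho$; letting $\rho$ be small this shows $F$ is bi-Lipschitz on a punctured neighborhood $\{ 0 < |x| < \delta \}$ with a \emph{uniform} constant, and the bound $|F(x)| \lesssim |x|$, $|x| \lesssim |F(x)|$ extends continuously across $x = 0$, giving continuity of $F$ and $F^{-1}$ at the origin and the bi-Lipschitz inequality there.

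Finally I would assemble the pieces: $F$ is locally bi-Lipschitz at each point of $E_2^c \setminus \{0\}$ by the annulus argument, and bi-Lipschitz on a full neighborhood of $0$ by the previous paragraph; since being locally bi-Lipschitz is a local property, $F$ is a locally bi-Lipschitz homeomorphism of $E_2^c$ onto $C_2^c$. The symmetric argument applied to $F^{-1} = I^{-1} \circ G^{-1} \circ I$, using Equation \eqref{eq_inversemap} and the $L^\infty$ bound on $DG^{-1}$ from item (1) of Lemma \ref{lemma_bilipoutsideball}, handles the inverse and completes the proof. The one point requiring care is that the generalized inversion $I_{a,r}$ with $a \ne 1$ is not conformal, so one cannot simply cite sphere-preservation; the substitute is the quantitative derivative bound \eqref{eq_invderiv}, which is exactly what makes $I_{a,r}$ bi-Lipschitz on each region bounded away from $0$.
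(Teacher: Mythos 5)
Your setup (localizing away from the origin, and near the origin exploiting that $G|B^c$ is globally bi-Lipschitz by Lemma \ref{lemma_bilipoutsideball} together with the decay of $I$ and $I^{-1}$) identifies the same ingredients the paper uses, but the central quantitative step is asserted rather than proved, and as stated it fails. You claim that $F = I^{-1}\circ (G|B^c)\circ I$ is ``a composition of three bi-Lipschitz maps on $\{|x|\ge\rho\}$ with a combined constant independent of $\rho$.'' The bi-Lipschitz constant of $I=I_{a,r}$ on $\{|x|\ge\rho\}$ is of order $\rho^{-(a+1)}$ (by \eqref{eq_invderiv}), hence blows up as $\rho\to 0$; the compensating contraction of $I^{-1}=I_{1/a,r}$ is available only at the matching scale, namely where the image point $G(I(x))$ has modulus comparable to $|I(x)|\approx r^{a+1}|x|^{-a}$. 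If you compose \emph{region-wide} Lipschitz constants, the constant of $I^{-1}$ on the full image region is governed by its smallest-modulus points (corresponding to $|x|$ near $\delta$, not near $\rho$), and the product of the three constants is of order $L_0(\delta/\rho)^{a+1}$, which diverges as $\rho\to0$; so ``letting $\rho$ be small'' gives nothing. The cancellation is pointwise, not region-wise: it holds for pairs of points at comparable distance from the origin, and for pairs with very different moduli one needs an extra argument (e.g.\ chaining through dyadic annuli, or the radial bound $|F(w)|\lesssim|w|$), none of which appears in your sketch. Relatedly, the modulus comparison ``$G$ distorts moduli by a bounded factor'' needs an anchor point, since bi-Lipschitzness controls differences, not absolute values.

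The paper closes exactly this gap by working with derivatives instead of composed Lipschitz constants: by the chain rule \eqref{eq_changevari} and the bounds \eqref{eq_invinverse}, \eqref{eq_invderiv},
$$
|DF(x)| \;\lesssim\; \frac{\|DG\|_\infty\,|I(x)|^{b+1}}{|(G\circ I)(x)|^{b+1}}, \qquad b = 1/a,
$$
for a.e.\ $x$ near $0$, and the ratio $|I(x)|/|(G\circ I)(x)|$ is shown to stay bounded (it tends to $L(G^{-1}|B^c)$) by the base-point estimate \eqref{eq_nearinfin}, which uses a fixed $y_0\in E_2^c$ and the global bi-Lipschitz constant of $G^{-1}$ on $B^c$. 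This yields $|DF|\in L^\infty$ on a punctured ball, and the Lipschitz property (across the puncture, for \emph{all} pairs of points) then follows from the $W^{1,\infty}$ characterization of Lipschitz maps; the symmetric argument with $G^{-1}$ handles $F^{-1}$. To repair your proof you should either reproduce this pointwise estimate, or restrict your composition argument to pairs with comparable moduli and add the chaining/radial step for the remaining pairs; as written, the key inequality is a non sequitur.
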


\begin{proof}
Without loss of generality, let $r=1$ and put $I = I_{a,r}$ and $b = 1/a$.  By Equation \eqref{eq_periodicext2}, we have $|G(x)| \to \infty$ as $|x| \to \infty$, so $F$ is a well-defined homeomorphism.  For each $\e > 0$, put $B_\e := B(0,\e)$.  The restrictions $I|B_\e^c$ and $I^{-1}|B_\e^c$ are diffeomorphisms, so $F|B_\e^c$ is already locally bi-Lipschitz for each $\e > 0$.  

To show that $F|B_\e$ is bi-Lipschitz, recall that $DG \in L^\infty(E_2^c)$ follows from Lemma \ref{lemma_bilipoutsideball}.
So from Equations \eqref{eq_changevari}, \eqref{eq_invinverse}, and \eqref{eq_invderiv}, it follows that, for a.e.\ $x \in I^{-1}(E_2^c)$,
\begin{eqnarray*}
|DF(x)| &\leq&
|DI^{-1}\big((G \circ I)(x)\big)| \, |DG(I(x))| \, |DI(x)| \\ &\lesssim&
\frac{\|DG\|_\infty}{|(G \circ I)(x)|^{b+1} \, |x|^{a+1}} \;\approx\;
\frac{\|DG\|_\infty \, |I(x)|^{b+1}}{|(G \circ I)(x)|^{b+1}}.
\end{eqnarray*}
Now fix $y_0 \in E_2^c$.  Putting $L := L(G^{-1}|B^c)$, for all $x \in B_\e$ we have
$$
|G(I(x)) - G(y_0)| \;\geq\;
L^{-1}\big(|I(x) - y_0|\big) \;\geq\;
L^{-1}\big(|I(x)| - |y_0|\big).
$$
Applying the triangle inequality to the right-hand side, we obtain
$$
|G(I(x))| \;\geq\;
L^{-1}\big(|I(x)| - |y_0|\big) \,-\, |G(y_0)|
$$
and taking reciprocals, we further obtain
\begin{equation}
\label{eq_nearinfin} \left\{
\begin{split}
\frac{|I(x)|}{|(G \circ I)(x)|} &\;\leq\;
\frac{L \, |I(x)|}{|I(x)| - |y_0| - L \, |G(y_0)|} \\ &\;=\;
\frac{L \, r^{a+1}}{r^{a+1} - |x|^a \, |y_0|  - |x|^a \, L \, |G(y_0)|} \;\to\; L
\end{split}
\right.
\end{equation} 
as $x \to 0$. Combining the previous estimates, for sufficiently small $\e > 0$
$$
|DF(x)| \;\lesssim\; 
\frac{\|DG\|_\infty \, |I(x)|^{b+1}}{|(G \circ I)(x)|^{b+1}} \;\lesssim\;
(2L)^{b+1}\|DG\|_\infty \;<\; \infty
$$
holds for a.e.\ $x \in B_\e$, and therefore $|DF| \in L^\infty_\loc(I^{-1}(E_2^c))$.  By \cite[Thm 4.2.3.5]{EvansGariepy}, it follows that $F$ is locally Lipschitz on $B(0,\e)$.  By symmetry, the same holds for $F^{-1}$, so $F$ is locally bi-Lipschitz on all of $I^{-1}(E_2^c)$.
\end{proof}

In the remaining proofs, we will require explicit forms of the extensions from Lemma \ref{lemma_identity} and from Theorem \ref{thm_disjointcase}.

\begin{lemma} \label{lemma_checkinvid}
Let $E_1$, $E_2$, $C_1$, $C_2$, $g$, and $B =B(0,r)$ be given as in Lemma \ref{lemma_invbilip}, let $G$ be given as in Equation \eqref{eq_periodicext2}, and let $p \in [1,n)$.  If $a < n/p-1$, then the homeomorphism $I_{a,r}^{-1} \circ G \circ I_{a,r}$ is of class $\LW$.
\end{lemma}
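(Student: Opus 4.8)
The plan is to localise the problem at the origin. Away from $0$, the map $I_{a,r}^{-1}\circ G\circ I_{a,r}$ is a composition of the $\LW$ homeomorphism $G$ with the smooth diffeomorphisms $I_{a,r}$ and $I_{a,r}^{-1}=I_{1/a,\,r}$, each restricted to a region whose closure avoids both $0$ and $\infty$, so Lemma~\ref{lemma_changevari} applies there verbatim. Thus, after composing with a dilation to arrange $r=1$ and writing $I:=I_{a,1}$, $b:=1/a$, and $F:=I^{-1}\circ G\circ I$ with $F(0)=0$ as in Lemma~\ref{lemma_invbilip}, it suffices to prove $\int_{B(0,\e)}|D^2F|^p<\infty$ and $\int_{B(0,\e)}|D^2F^{-1}|^p<\infty$ for some small $\e>0$: indeed $F$ is already a locally bi-Lipschitz homeomorphism by Lemma~\ref{lemma_invbilip}, and $F$, $F^{-1}$ lie in $W^{2,p}_\loc$ away from $0$ by the remark just made.

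Next I would expand $D^2F$ on $B(0,\e)\setminus\{0\}$ by applying the chain rule \eqref{eq_changevari} twice to $F=I^{-1}\circ(G\circ I)$. Writing $u=(G\circ I)(x)$, this yields, up to indices, three kinds of terms whose Hilbert--Schmidt norms are dominated by
$$
|DI^{-1}(u)|\,|DG(I(x))|\,|D^2I(x)|,\qquad |DI^{-1}(u)|\,|D^2G(I(x))|\,|DI(x)|^2,\qquad |D^2I^{-1}(u)|\,|DG(I(x))|^2\,|DI(x)|^2 .
$$
Using \eqref{eq_invderiv} and \eqref{eq_invinverse} for the derivatives of $I$ and $I^{-1}$, the bound $\|DG\|_{L^\infty}<\infty$ from Lemma~\ref{lemma_bilipoutsideball}, and the comparison $|u|\approx|I(x)|$ as $x\to0$ furnished by \eqref{eq_nearinfin}, the first and third terms are each $\lesssim\|DG\|_{L^\infty}^2\,|x|^{-1}$; since $p<n$, the function $x\mapsto|x|^{-1}$ lies in $L^p(B(0,\e))$. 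This part uses neither the value of $a$ nor the fine structure of $G$.

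The middle term is the crux. The same estimates bound it by $C(r)\,|D^2G(I(x))|\,|x|^{-(a+1)}$, and the change of variables $y=I(x)$ — legitimate since $I$ is a diffeomorphism away from the origin, carrying $B(0,\e)\setminus\{0\}$ onto $\{|y|>R_0\}$ for some large $R_0$ — together with the Jacobian bound \eqref{eq_invjacobian} for $I^{-1}=I_{b,1}$ converts $\int_{B(0,\e)}|D^2G(I(x))|^p|x|^{-p(a+1)}\,dx$ into a constant times
$$
\int_{\{|y|>R_0\}}|D^2G(y)|^p\,|y|^{(p-n)(b+1)}\,dy .
$$
Here I would invoke the explicit form \eqref{eq_periodicext2} of $G$: off the set $\bigcup_k\tau_k(\bar\B)$ one has $G=\tau_1\circ S$, so $D^2G=D^2S$ there, which by \eqref{eq_derivSbd} is bounded by $2c^{-2}$ and supported in the fixed, finite-thickness slab $\Sigma$ on which the shear $S$ acts; and on $\bigcup_k\tau_k(\bar\B)$ the periodicity \eqref{eq_periodic} shows that the $L^p$-mass of $D^2G$ over $\tau_k(\bar\B)$ is finite and independent of $k$ (as $G\in W^{2,p}_\loc$). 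Splitting the integral over $\Sigma\cap\{|y|>R_0\}$ and over $\bigcup_k\tau_k(\bar\B)\cap\{|y|>R_0\}$, the former converges whenever $\int_{\R^{n-1}}(1+|y'|)^{(p-n)(b+1)}\,dy'$ does and the latter whenever $\sum_k(1+|k|)^{(p-n)(b+1)}$ does, that is, provided $(n-p)(b+1)>n-1$ (recall $n\ge2$). The hypothesis $a<n/p-1$ gives exactly $b+1=\tfrac1a+1>\tfrac{n}{n-p}$, hence $(n-p)(b+1)>n>n-1$, so $F\in W^{2,p}_\loc$.

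Finally, $F^{-1}=I^{-1}\circ G^{-1}\circ I$, and by \eqref{eq_inversemap} the map $G^{-1}$ has the same structure as $G$ — affine outside a slab-and-cylinder, with $DG^{-1}\in L^\infty$ and $D^2G^{-1}$ of uniformly bounded $L^p$-mass per period — so the three-term expansion and the weighted estimate apply word for word to $F^{-1}$; hence $I_{a,r}^{-1}\circ G\circ I_{a,r}$ is of class $\LW$. The main obstacle is the weighted estimate of the third paragraph: one must pin down precisely where $D^2G$ is supported and its (lack of) growth at infinity, and verify that the decay of the inversion's Jacobian weight $|y|^{(p-n)(b+1)}$ overcomes it — which is exactly what the constraint $a<n/p-1$ secures, and where the Sobolev exponent $p<n$ enters in an essential, borderline-sensitive way.
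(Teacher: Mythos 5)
Your overall skeleton is the same as the paper's: localize at the origin, expand $D^2(I^{-1}\circ G\circ I)$ into the three chain-rule terms, kill the two terms without $D^2G$ by the bound $\lesssim |x|^{-1}\in L^p(B(0,\e))$ (using \eqref{eq_nearinfin} and $DG\in L^\infty$), and convert the remaining term by the substitution $y=I(x)$ into the weighted integral $\int_{\{|y|>R_0\}}|D^2G(y)|^p|y|^{(p-n)(b+1)}\,dy$, whose convergence is where $a<n/p-1$ enters. Up to that point your argument matches the paper's Equations \eqref{eq_2ndderivbd1}--\eqref{eq_2ndderivbd2}.

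The gap is in your structural claim about $G$: it is not true that $G=\tau_1\circ S$ off $\bigcup_k\tau_k(\bar\B)$, so the assertion that there $|D^2G|=|D^2S|\leq 2c^{-2}$ and is supported in the fixed slab fails. Indeed $G=\tau_1\circ g_*^{-1}\circ S\circ g_*$, and the shear $S$ displaces points by up to $3$ units in the $-e_1$ direction; a point $x$ in the slab lying \emph{between} two period balls (e.g.\ $x=(3k+1.5,0,\dots,0,x_n)$ with $x_n$ in the slab) is mapped by $S$ into $\tau_k(\B)$, where $g_*^{-1}\neq\id$. At such points $D^2G$ contains the term $D^2g_*^{-1}\big((S\circ g_*)(x)\big)$ times first-order factors, which is only an $L^p_{\loc}$ quantity, not bounded by $2c^{-2}$; and this region lies outside $\bigcup_k\tau_k(\bar\B)$, so your periodicity step (which, via \eqref{eq_periodic}, only gives uniform per-cell mass on the balls themselves) does not cover it either. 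This cross term is exactly what the paper spends the second half of its proof controlling: it bounds $|D^2G(y)|\lesssim |D^2g_*^{-1}((S\circ g_*)(y))|+|D^2S(g_*(y))|+|D^2g_*(y)|$ (Equation \eqref{eq_2ndderivbd3}), notes that $D^2g_*^{-1}$ vanishes off $\bigcup_k\tau_k(\B)$, changes variables, and uses the displacement estimate $|(S\circ g_*)^{-1}(z)|\geq 3k-1>k$ for $z\in\tau_k(\B)$ to obtain a convergent series $\sum_k k^{-(n-p)(b+1)}\|D^2g^{-1}\|^p_{L^p}$. Your argument can be repaired in the same spirit without the full decomposition — enlarge the period cells to, say, $\tau_k(B(0,4))$, check that they absorb $S^{-1}(\tau_k(\bar\B))\cap\{a<x_n<b\}$, and verify that the periodicity identity extends to these larger cells for $k\geq1$ so the per-cell $L^p$ mass of $D^2G$ is still uniform — but as written the pointwise bound and support claim for $D^2G$ off the balls are false, and that is the crux of the estimate rather than a side remark.
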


\begin{proof}
For convenience, we reuse the notation from the proof of Lemma \ref{lemma_invbilip}.  As before, $I|B_\e^c$ and $I^{-1}|B_\e^c$ are diffeomorphisms, so by Lemma \ref{lemma_changevari}, the map $F|B_\e^c$ is of class $\LW$.  It suffices to show that $F \in W^{2,p}_{loc}(B_\e;\R^n)$ and $F^{-1} \in W^{2,p}_{loc}(F(B_\e);B_\e)$, for each $\e > 0$.

To estimate second derivatives, we use Equations \eqref{eq_changevari}, \eqref{eq_invinverse}, \eqref{eq_invderiv}, and \eqref{eq_nearinfin} once again.  As a shorthand, put $y := I(x)$ and $z := (G \circ I)(x)$.  We then obtain
\begin{equation} \label{eq_2ndderivbd1}
\left\{ \begin{split}
|D^2F(x)| \;=\; &
|D^2(I^{-1} \circ G \circ I)(x))| \\ \;\leq\; &
|D^2I^{-1}(z)| \, |DG(y)|^2 |DI(x)|^2 \\ & \,+\,
|DI^{-1}(z)| \Big( |D^2G(y)| \, |DI(x)|^2 \,+\,
|DG(y)| \, |D^2I(x)| \Big) \\ \;\lesssim\; &
\frac{\|DG\|_\infty^2}{|z|^{b+2}|x|^{2(a+1)}} \,+\,
\frac{1}{|z|^{b+1}} \bigg( \frac{|D^2G(y)|}{|x|^{2(a+1)}} \,+\,
\frac{\|DG\|_\infty}{|x|^{a+2}} \bigg) \\ \;\lesssim\; &
\frac{|I(x)|^{2(b+1)}}{|G(I(x))|^{b+2}} \,+\,
\frac{|I(x)|^{2(b+1)}|D^2G(I(x))|}{|G(I(x))|^{b+1}} \,+\,
\frac{|I(x)|^{b+1}}{|G(I(x))|^{b+1}|x|} \\ \;\lesssim\; &
|I(x)|^b \,+\, |I(x)|^{b+1}|D^2G(I(x))| \,+\, |x|^{-1}
\end{split}\right.
\end{equation}
for a.e.\ $x \in B_\e$.  Since $p < n$ and $b = 1/a$, the function $x \mapsto |I(x)|^b = |x|^{-1}$ lies in $L^p(B_\e)$.  For the remaining term, Equations \eqref{eq_invinverse} and \eqref{eq_invjacobian} imply that
$$
1 \;=\; 
JI^{-1}\big(I(x)\big) JI(x) \;\lesssim\; 
|I^{-1}(I(x))|^{n(a+1)} JI(x) \;=\;
|I(x)|^{-n(b+1)} JI(x)
$$
so by a change of variables \cite[Thm 2.2.2]{Ziemer} and Equation \eqref{eq_invjacobian}, we have
\begin{equation} \label{eq_2ndderivbd2}
\left\{\begin{split}
\int_{B_\e} |I(x)|^{p(b+1)}|D^2G(I(x))|^p \,dx &\;\lesssim\;
\int_{B_\e} \frac{|D^2G(I(x))|^p JI(x)}{|I(x)|^{(n-p)(b+1)}} \,dx \\ &\;=\;
\int_{\B^c} \frac{|D^2G(y)|^p}{|y|^{(n-p)(b+1)}} \,dy.
\end{split}\right.
\end{equation}
For each $k \in \N$, Equation \eqref{eq_periodicext2} implies that $G|\tau_k(E_2) = \id$ and $G|\tau_k(E_1) = \tau_1$, and therefore $D^2G|\tau_k(E_1 \cup E_2) = 0$.  The rightmost integral in Equation \eqref{eq_2ndderivbd2} can therefore be restricted to the subset
$$
\Omega \;:=\; 
\B^c \setminus \bigcup_{k=1}^\infty \tau_k(E_1 \cup E_2).
$$
As defined in the proof of Lemma \ref{lemma_identity} 
the maps $g_*$, $G_*$, and $G$ satisfy
\begin{equation} \label{eq_2ndderivbd3}
|D^2G(y)| \;\lesssim\;
|D^2g_*^{-1}((S \circ g_*)(y))| + |D^2S(g_*(y))| + |D^2g_*(y)|
\end{equation}
for a.e.\ $y \in I^{-1}(E_2^c)$, and where $\lesssim$ includes the constants $L(g_*)$, $L(g_*^{-1})$, $L(S)$, and $L(\tau_1)$.  
Using the second derivative bound for $S$ (Equation \eqref{eq_derivSbd}), we obtain
$$
\int_\Omega \frac{|D^2S(g_*(y))|^p}{|y|^{(n-p)(b+1)}} \,dy \;\leq\;
\int_\Omega \frac{2c^{2p}}{|y|^{(n-p)(b+1)}} \,dy \;\lesssim\;
\int_1^\infty \frac{\rho^{n-1}}{\rho^{(n-p)(b+1)}}\,d\rho.
$$
The rightmost integral is finite, since $a < n/p-1$ implies that $b > p/(n-p)$ and
$$
(n-1) - (n-p)(b+1) \;<\; 
(n-1) - (n-p)\Big(\frac{p}{n-p} - 1\Big) \;=\;
-1.
$$
For the other terms of Equation \eqref{eq_2ndderivbd3}, Equation \eqref{eq_periodicmap} implies that $D^2g_*^{-1}(z) = 0$ for a.e.\ $z \notin \bigcup_{k=1}^\infty \tau_k(\B)$.
Since $S \circ g_*$ is locally bi-Lipschitz, we estimate 
\begin{eqnarray*}
\int_\Omega \frac{|D^2g_*^{-1}\big((S \circ g_*)(y)\big)|^p}{|y|^{(n-p)(b+1)}} \,dy &=&
\sum_{k=1}^\infty \int_{\tau_k((S \circ g_*)^{-1}(\B)) \cap \Omega} \frac{|D^2g_*^{-1}\big((S \circ g_*)(y)\big)|^p}{|y|^{(n-p)(b+1)}} \,dy 
\\ &\approx&
\sum_{k=1}^\infty \int_{g_*^{-1}(\Omega) \cap \tau_k(\B)} \frac{|D^2g_*^{-1}(z)|^p \,dz}{|(S \circ g_*)^{-1}(z)|^{(n-p)(b+1)}}
\end{eqnarray*}
Equation \eqref{eq_shearmap} implies that $|S^{-1}(y)| \geq |y|$ holds, for each $y \in \R^n$, and therefore
$$
|(S \circ g_*)^{-1}(z)| \;\geq\; 3k-1 \;>\; k
$$
holds, for each $z \in \tau_k(\B)$ and each $k \in \N$.  From the above inequalities and another change of variables, we further estimate
\begin{eqnarray*}
\int_{g_*^{-1}(\Omega) \cap \tau_k(\B)} \frac{|D^2g_*^{-1}(z)|^p}{|(S \circ g_*)^{-1}(z)|^{(n-p)(b+1)}} \,dz &\lesssim&
\int_{g_*^{-1}(\Omega) \cap \tau_k(\B)} \frac{|D^2g_*^{-1}(z)|^p}{k^{(n-p)(b+1)}} \,dz \\ &\leq&
\frac{\int_{\B \setminus (C_1 \cup C_2)} |D^2g^{-1}(z)|^p \,dz}{k^{(n-p)(b+1)}}, \\
\textrm{ so } \int_\Omega \frac{|D^2g_*^{-1}\big((S \circ g_*)(y)\big)|^p}{|y|^{(n-p)(b+1)}} \,dy &\lesssim&
\sum_{k=1}^\infty \frac{\|D^2g^{-1}\|_{L^p(\B \setminus (C_1 \cup C_2))}}{k^{(n-p)(b+1)}}.
\end{eqnarray*}
The rightmost sum is finite, since $(n-p)(b+1) > 1$ follows from the hypothesis that $a < n/p-1$.  A similar estimate gives $|y|^{(p-n)(b+1)}|D^2g_*(y)| \in L^p(B_\e)$, so by Equations \eqref{eq_2ndderivbd1}-\eqref{eq_2ndderivbd3}, we obtain  $|D^2F| \in L^p(B_\e)$, as desired.  

The same argument, with $G^{-1}$ for $G$, shows that the map $F^{-1} = I^{-1} \circ G^{-1} \circ I$ also lies in $W^{2,p}_{loc}(F(B_\e);B_\e)$.  This proves the lemma.
\end{proof}

Using the previous lemmas, we now prove the main theorem.

\begin{proof}[Proof of Theorem \ref{thm_mainthm}]
Let $a < n/p-1$ be given.
By post-composing $f$ with linear maps, we may assume that the balls $B_1$ and $B_2$ satisfy hypotheses (H1) and (H2) from Section \ref{sect_inversions}, so in particular we have $B(0,r) \subset B_2 \setminus \bar{B}_1$.  
We further assume that $B(0,r) \subset D_2 \setminus \bar{D}_1$ and $f(0) =0$.

By Lemma \ref{lemma_slabcond}, there exist $c_1 < c_2$ so that $B_1 \subset \{x_n < c_1\}$ and $B_2 \subset \{x_n > c_2 \}$.  For $I := I_{a,r}$ and $g := I \circ f \circ I^{-1}$,  Lemma \ref{lemma_checkinv} implies that $g$ is of class $\LW$.

Put $E_1 = I(D_1)$, $E_2 := I(D_2^c)^c$, $C_1 := I(B_1)$, and $C_2 := I((B_2)^c)^c$.
By Lemma \ref{lemma_identitynonballs} and the proof of Theorem \ref{thm_disjointcase}, there exists a homeomorphism $G : E_2^c \to C_2^c$ of class $\LW$ and a neighborhood $N'$ of $\del E_2$ so that 
$$
g|(N' \cap E_2^c) \;=\; G|(N' \cap E_2^c).
$$
As a result, the homeomorphism $F$, as defined in Lemma \ref{lemma_invbilip}, and the open set $N := I^{-1}(N')$, a neighborhood of $\del D_2$, therefore satisfy the identity
$$
f|(N \cap \bar{D}_2) \;=\; F|(N \cap \bar{D}_2).
$$
Recalling the proof of Theorem \ref{thm_disjointcase}, we have $G = H \circ g \circ R^{-1}$, where
\begin{itemize}
\item[(H3)] $R$ is a diffeomorphism that agrees with the identity map on $\B^c$;
\item[(H4)] $H$ is a homeomorphism of class $\LW$, as given from Lemma \ref{lemma_identitynonballs}, that agrees with $h = g \circ R \circ g^{-1}$ on the open set $(g \circ R)(N')$.
\end{itemize}
Putting $H_* := I^{-1}\circ H \circ I$ and $R_* := I^{-1}\circ R \circ I$, we rewrite
$$
F \;=\; 
I^{-1} \circ (H \circ g \circ R^{-1}) \circ I \;=\;
H_* \circ f \circ R^{-1}_*.
$$
From property (H3) and properties of $I$ and $I^{-1}$, we see that $R_*^{-1}$ is a diffeomorphism from $\R^n \setminus \{0\}$ onto itself.  In particular, for each $r > 0$ the restriction $R^{-1}_*|B(0,r)^c$ is bi-Lipschitz.  On the other hand, for sufficiently small $r > 0$ we have $R^{-1} \circ I = I$ on $B(0,r)$.  Letting $\operatorname{Id}_n$ be the $n \times n$ identity matrix,
\begin{eqnarray*}
DR^{-1}_*|B(0,r) &=&
D(I^{-1} \circ R^{-1} \circ I)|B(0,r) \;=\;
D(I^{-1} \circ I)|B(0,r) \;=\; \operatorname{Id}_n \\
D^2R^{-1}_*|B(0,r) &=&
D^2(I^{-1} \circ R^{-1} \circ I)|B(0,r) \;=\;
D^2(I^{-1} \circ I)|B(0,r) \;=\; 0.
\end{eqnarray*}
This implies that $R^{-1}_* \in W^{2,p}_{loc}(\R^n;\R^n)$ and by Lemma \ref{lemma_gluelip}, that $R^{-1}_*$ is bi-Lipschitz.  By symmetry the same holds for $R_* = I^{-1} \circ R \circ I $, so $R_*^{-1}$ is of class $\LW$.

Property (H4) and Lemma \ref{lemma_checkinvid} imply that $H_*$ is of class $\LW$.  By hypothesis, $f$ is of class $\LW$, so by Lemma \ref{lemma_changevari}, $F$ is of class $\LW$.  The theorem follows.
\end{proof}

\bibliographystyle{alpha}
\bibliography{schoenflies1}
\end{document}